\newtheorem{theorem}{Theorem}[section]
\newtheorem*{theorem*}{Theorem}
\newtheorem{lemma}[theorem]{Lemma}
\newtheorem{proposition}[theorem]{Proposition}
\newtheorem{corollary}[theorem]{Corollary}
\newtheorem*{conjecture*}{Conjecture}
\newtheorem{question}[theorem]{Question}
\newtheorem{example}[theorem]{Example}
\newcommand{\ie}{{\em i.e.}\ }
\newcommand{\ul}[1]{\underline{#1}}
\newcommand{\opname}[1]{\operatorname{\mathsf{#1}}}
\newcommand{\proj}{\opname{proj}\nolimits}
\newcommand{\Mod}{\opname{Mod}\nolimits}
\newcommand{\per}{\opname{per}\nolimits}
\newcommand{\thick}{\opname{thick}\nolimits}
\newcommand{\cok}{\opname{cok}\nolimits}
\newcommand{\Z}{\mathbb{Z}}
\newcommand{\ra}{\rightarrow}
\newcommand{\Hom}{\opname{Hom}}
\newcommand{\cHom}{{\mathcal{H}\it{om}}}
\newcommand{\cEnd}{\mathcal{E}\it{nd}}
\newcommand{\Ext}{\opname{Ext}}
\newcommand{\ten}{\otimes}
\newcommand{\lten}{\overset{\boldmath{L}}{\ten}}
\newcommand{\cd}{{\mathcal D}}
\newcommand{\ulx}{\underline{x}}
\newcommand{\uly}{\underline{y}}
\newcommand{\op}{\mathrm{op}}
\newcommand{\Cone}{\opname{Cone}\nolimits}
\newcommand{\LL}{\ell\ell}
\numberwithin{equation}{section}
\begin{document}

\title[Stratifications of algebras with two simple modules]{Stratifications of algebras with two simple modules}

\author{Qunhua Liu}
\address{Qunhua Liu, Institute of Mathematics, School of Mathematical Sciences, Nanjing Normal University, 1 Wenyuan Road, Yadong New District, Nanjing 210023, P.R.China} \email{05402@njnu.edu.cn}
\thanks{The first named author acknowledges partial support by Jiangsu Province BK20130899. Both authors acknowledge partial support by Natural Science Foundation of China 11301272. They thank a referee for helpful suggestions.}

\author{Dong Yang}
\address{Dong Yang, Department of Mathematics, Nanjing University, 22 Hankou Road, Nanjing 210093, P. R. China}
\email{dongyang2002@gmail.com}
\date{Last modified on \today}

\begin{abstract} Let $A$ be a finite-dimensional algebra with two simple modules. It is shown that if the derived category of $A$ admits a stratification with simple factors being the base field $k$, then $A$ is derived equivalent to a quasi-hereditary algebra. As a consequence, if further $k$ is algebraically closed and $A$ has finite global dimension, then $A$ is either derived simple or derived equivalent to a quasi-hereditary algebra.\\
{\bf MSC 2010 classification}: 16E35, 16E40, 16E45.\\
{\bf Keywords}: recollement, stratification, quasi-hereditary algebra, derived simple algebra, Hochschild cohomology.
\end{abstract}

\maketitle


\section{Introduction}

Quasi-hereditary algebras,  introduced by Cline, Parshall and Scott \cite{ClineParshallScott88b,Scott87}, are  finite-dimensional algebras with certain ring-theoretical stratifications, which induce  stratifications of their derived categories with simple factors being finite-dimensional division algebras, due to \cite{Cline-Parshall-Scott88}.
One asks if the converse is true, roughly speaking, if a stratification of the derived category induces a ring-theoretical stratification of the algebra. This can possibly be true only up to derived equivalence, because the class of quasi-hereditary algebras is not closed under derived equivalence, see \cite[Example]{DlabRingel89a} and Section~\ref{ss:qh-alg-rank-2} for examples.

\begin{question}
Let $k$ be a field and $A$ be a finite-dimensional $k$-algebra. Assume that $\cd(\Mod A)$ admits a stratification with simple factors being finite-dimensional division $k$-algebras. Is $A$ then derived equivalent to a quasi-hereditary algebra?
\end{question}

In this paper, we give a partial answer to the question for $A$ with two (isomorphism classes of) simple modules.

\begin{theorem}[=\ref{t:stratification-implies-qh}]\label{t:thm1} Let $k$ be a field and let $A$ be a finite-dimensional $k$-algebra with two simples modules. If $\cd(\Mod A)$ admits a stratification with all simple factors being $k$, then $A$ is derived equivalent to a quasi-hereditary algebra.
\end{theorem}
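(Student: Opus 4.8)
The plan is to convert the homological hypothesis into an exceptional pair, reduce the algebra (up to derived equivalence) to a formal triangular dg-algebra, and then realise the latter by a tilting complex whose endomorphism algebra is visibly quasi-hereditary. First I would use the rank count: because $A$ has two simple modules, the Grothendieck group $K_0(\per A)$ has rank two, and since each stratum $\cd(\Mod k)$ contributes a rank-one summand, the stratification collapses to a single recollement
$$\cd(\Mod k) \overset{i_*}{\ra} \cd(\Mod A) \overset{j^*}{\ra} \cd(\Mod k).$$
Setting $E_1 := j_!(k)$ and $E_2 := i_*(k)$, the recollement axioms give $\RHom_A(E_1,E_1)\cong k\cong \RHom_A(E_2,E_2)$ (via $j^*j_!\cong\id$ and $i^!i_*\cong\id$) and $\RHom_A(E_1,E_2)=\RHom_A(k,j^*i_*k)=0$ (via $j^*i_*=0$). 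Thus $(E_1,E_2)$ is an exceptional pair generating $\cd(\Mod A)$, with $E_1$ compact and the single possibly nonzero cross-term being the gluing complex $C:=\RHom_A(E_2,E_1)$. I would also record that this configuration forces $A$ to have finite global dimension, so that $\per A=\cd^b(\mod A)$.

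Next, by Keller's dg-Morita theorem (applied once $E_1\oplus E_2$ is identified as a compact generator of $\per A$), $A$ is derived equivalent to the dg-endomorphism algebra $\Gamma:=\RHom_A(E_1\oplus E_2,\,E_1\oplus E_2)$, which by the vanishing $\RHom_A(E_1,E_2)=0$ is triangular,
$$\Gamma\;\simeq\;\begin{pmatrix} k & 0 \\ C & k \end{pmatrix}.$$
The key step here is that $\Gamma$ is formal: the bimodule $C$ is merely a complex of $k$-vector spaces, hence quasi-isomorphic to its cohomology $V^\bullet:=H^\bullet(C)$ as a dg $k$–$k$–bimodule, while all higher $A_\infty$-products on $H^\bullet(\Gamma)$ vanish by strict unitality together with triangularity (a product of two off-diagonal elements lands in a zero block). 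Consequently $A$ is derived equivalent to the graded algebra $R=\left(\begin{smallmatrix} k & 0 \\ V^\bullet & k\end{smallmatrix}\right)$, the path algebra of the two-vertex quiver with $\dim_k V^n$ arrows placed in cohomological degree $n$.

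If $V^\bullet$ is concentrated in a single degree $d$, the statement follows at once: replacing $E_2$ by the shift $E_2[d]$ makes $E_1\oplus E_2[d]$ a genuine tilting object, since $\Hom_A(E_1,(E_2[d])[m])=V^{d+m}$ survives only for $m=0$. Its endomorphism algebra is the triangular matrix algebra $\left(\begin{smallmatrix} k & V^d \\ 0 & k\end{smallmatrix}\right)$, which is hereditary and therefore quasi-hereditary.

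The crux, and the step I expect to be the main obstacle, is the general case in which $V^\bullet$ is spread over several degrees: then no shift of the summands concentrates all cross-extensions, and $R$ is a genuinely graded algebra which need not be the derived-endomorphism algebra of an ordinary module. I would attack this from two sides. First, exploit the full strength of the recollement of the \emph{big} categories, not just the induced semiorthogonal decomposition: the six functors constrain $E_2=i_*(k)$ — precisely its compactness, equivalently whether the recollement restricts to $\per A$ — and this should bound the cohomological amplitude of $C$. Secondly, and independently, glue the two standard t-structures (Beilinson–Bernstein–Deligne) to obtain a bounded t-structure on $\cd^b(\mod A)$ whose heart $\cb$ is a finite-length category with exactly two simple objects fitting into a recollement $\Mod k\to\cb\to\Mod k$; such a recollement of module categories is exactly a highest-weight structure, so $\cb\simeq\mod B$ with $B$ quasi-hereditary. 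It then remains to prove that the realisation functor $\cd^b(\cb)\to\cd^b(\mod A)$ is an equivalence, which amounts again to the assertion that the self-extensions of $E_1\oplus E_2$ computed in $\cb$ agree with those in $\cd(\Mod A)$ — the same amplitude control in disguise. Verifying this in the rank-two situation, most plausibly by mutating the exceptional pair until the gluing becomes concentrated in one degree, is where the real work lies.
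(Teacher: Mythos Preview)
Your reduction to the triangular dg-algebra $\Gamma\simeq\left(\begin{smallmatrix} k & 0 \\ V^\bullet & k\end{smallmatrix}\right)$ is exactly what the paper does, and the formality step is correct for the reason you give. The gap is precisely where you locate it: controlling the amplitude of $V^\bullet$. None of your three proposed attacks closes it.

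Strategy 1 (compactness of $i_*(k)$) only tells you $V^\bullet$ has finite total dimension and bounded amplitude, not that the amplitude is at most two. Strategy 2 you yourself recognise as circular. Strategy 3 is not merely incomplete but aims at the wrong target: you cannot hope to mutate until $V^\bullet$ sits in a single degree, because that would make $A$ derived equivalent to a hereditary algebra, whereas the quasi-hereditary algebra $B(x,y)$ with $x,y\geq 1$ has $HH^2(B(x,y))=k^{xy}\neq 0$ and so is \emph{not} derived hereditary. The correct endpoint is $V^\bullet$ concentrated in \emph{two} successive degrees, and mutation of the exceptional pair does not obviously reduce the spread (it replaces $V$ by a cone, which can be worse).

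The paper's resolution is a Hochschild cohomology computation you are missing entirely. For the graded Kronecker algebra $\Lambda$ one computes $HH^p(\Lambda)$ explicitly from the two-term Koszul bimodule resolution; the answer depends only on the multiset of degrees of arrows, and one finds that if $a,b$ are the minimal and maximal such degrees then $HH^{1-b+a}(\Lambda)\neq 0$. Since $\Lambda$ is derived equivalent to the ordinary algebra $A$, its negative Hochschild cohomology vanishes, forcing $b-a\leq 1$. Thus $V^\bullet=V^d\oplus V^{d+1}$ for some $d$; after a shift one takes $d=0$, and then a direct comparison shows $\Lambda$ is derived equivalent to $B(\dim V^1,\dim V^0)$. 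The invariant that pins down the amplitude is Hochschild cohomology, not the recollement functors or the glued $t$-structure.
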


The key point is to prove that a certain differential graded algebra, which is derived equivalent to $A$, has cohomologies concentrated in two successive degrees. This is achieved by comparing Hochschild cohomologies. As a consequence of Theorem~\ref{t:thm1}, there is the following dichotomy, which was used in the introduction of \cite{Membrillo-Hernandez94} but was not proved in the literature.

\begin{theorem}[=\ref{t:dichotomy}]\label{t:thm2} Let $k$ be an algebraically closed field and
let $A$ be a finite-dimensional $k$-algebra with two simple modules. Assume that $A$ has finite global dimension. Then $A$ is either derived simple or derived equivalent to a quasi-hereditary algebra.
\end{theorem}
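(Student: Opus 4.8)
The plan is to reduce everything to Theorem~\ref{t:thm1} by analysing what the failure of derived simplicity forces. If $A$ is derived simple we are already in the first alternative, so assume it is not. Then $\cd(\Mod A)$ admits a nontrivial recollement relative to $\cd(\Mod B)$ and $\cd(\Mod C)$ for some finite-dimensional $k$-algebras $B$ and $C$. The whole point will be to show that this recollement is itself a stratification whose two simple factors are copies of $k$; Theorem~\ref{t:thm1} then finishes the proof.

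First I would pin down the ranks. Passing to compact objects, a recollement of $\cd(\Mod A)$ induces a semiorthogonal decomposition up to summands of the perfect complexes, hence additivity of the ranks of the Grothendieck groups, $\rank K_0(\per B)+\rank K_0(\per C)=\rank K_0(\per A)$. Since $A$ has finite global dimension, $\per A=\cd^b(\smod A)$ and this rank equals the number of simple $A$-modules, namely $2$; nontriviality of the recollement forces each summand to be nonzero, so $\rank K_0(\per B)=\rank K_0(\per C)=1$. Thus $B$ and $C$ are connected with a single simple module, i.e.\ local algebras.

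Next I would transfer finiteness of global dimension from the middle term to the two outer terms. Because $A$ has finite global dimension, the recollement should restrict to the bounded derived categories of finite-dimensional modules, and from the restricted recollement one reads off that $B$ and $C$ again have finite global dimension. Granting this, the no-loop theorem applies to each local algebra: finite global dimension forces $\Ext^1_B(S,S)=0$ for the unique simple $S$, hence $J/J^2=0$ for the radical $J$ and, by Nakayama, $J=0$; so $B$ is a division algebra, and as $k$ is algebraically closed $B\cong k$, and likewise $C\cong k$. Therefore the recollement exhibits $\cd(\Mod A)$ as a stratification with both simple factors equal to $\cd(\Mod k)$, and Theorem~\ref{t:thm1} yields that $A$ is derived equivalent to a quasi-hereditary algebra.

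The step I expect to be the main obstacle is the transfer of finite global dimension to $B$ and $C$: in general a recollement of unbounded derived categories need not restrict to the bounded or perfect levels, and the outer terms of an arbitrary recollement of $\cd(\Mod A)$ need not inherit finiteness of global dimension. The argument therefore hinges on using the hypothesis on $A$ to guarantee both that the recollement does restrict and that its restriction is again of standard form, which is precisely where the homological assumptions must be exploited; by contrast, the rank count and the no-loop/Nakayama reduction are comparatively routine once this is in place.
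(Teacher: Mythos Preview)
Your proposal is correct and follows essentially the same route as the paper's proof: assume $A$ is not derived simple, take a nontrivial recollement, argue that the outer terms $B$ and $C$ are local finite-dimensional algebras of finite global dimension, hence Morita equivalent to $k$, and then invoke Theorem~\ref{t:thm1}. The only substantive difference is that what you flag as the ``main obstacle''—the transfer of finite global dimension and the simple-module count to $B$ and $C$—is precisely the content of Lemma~\ref{l:recollement}(b)(c), which the paper simply quotes (from \cite{AngeleriKoenigLiuYang13}); there is no need to argue via restriction of the recollement to bounded levels, and your $K_0$-rank computation is just a rephrasing of $r(A)=r(B)+r(C)$.
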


Theorems \ref{t:thm1} and \ref{t:thm2} are proved in Section~\ref{s:the-main-result}. In Section~\ref{s:example} we illustrate by an example how Theorem~\ref{t:thm2} is applied to show the derived simplicity of algebras. More precisely,
we construct a family of algebras $A_n(\underline{x},\underline{y})$ with two  simple modules for $n\geq 0$ and two sequences of positive integers $\underline{x},\underline{y}$, which extends the family of algebras $A_n$ studied by E. Green \cite{Green80}, Happel \cite{Happel91b} and Membrillo-Hern\'andez \cite{Membrillo-Hernandez94}. We show, by comparing the top Hochschild cohomologies,  that $A_n(\underline{x},\underline{y})$ for any $n\geq 3$ and any $\underline{x},\underline{y}$ is not derived equivalent to a quasi-hereditary algebra and hence is derived simple.

\smallskip

Throughout, let $k$ be a field. All algebras and modules are over $k$. We use right modules.


\section{Preliminaries}
In this section, we recall the definitions and basic properties of derived categories of differential graded algebras, Hochschild cohomologies, recollements and stratifications. We also recall the classification of quasi-hereditary algebras with two simple modules.

\subsection{Derived category} \label{ss:derived-cat}
We follow \cite{Keller94,Keller06d}.
For a differential graded (=dg) algebra $A$, let $\cd(A)$ denote the derived category of (right) dg $A$-modules. If $A$ is an (ordinary) algebra, considered as a dg algebra concentrated in degree $0$, then a dg $A$-module is nothing else but a complex of (right) $A$-modules and $\cd(A)$ is the same as $\cd(\Mod A)$, the unbounded derived category of the abelian category $\Mod A$ of $A$-modules.

For two dg $A$-modules $M$ and $N$, define $\cHom_A(M,N)$ to be the complex whose component $\cHom_A^p(M,N)$ consists of homogenous maps of \emph{graded} $A$-modules of degree $p$ from $M$ to $N$, \ie
\begin{eqnarray*}\cHom_A^p(M,N)=\big\{f\in\prod_{q\in\mathbb{Z}}\Hom_k(M^q,N^{p+q}) \, \big |
\, f(ma)=f(m)a,\ \forall m\in M\text{ and } a\in A\big\},\end{eqnarray*} and whose 
differential is given by $d(f)=d_N f-(-1)^p f d_M$ for
$f\in\cHom_A^p(M,N)$. The complex $\cEnd_A(M)=\cHom_A(M,M)$ is a dg algebra  with multiplication being the
composition of maps. For any dg $A$-module $M$, by \cite[Theorem 3.1]{Keller94}, there is a cofibrant dg $A$-module $P$ together with a quasi-isomorphism $P\ra M$ of dg $A$-modules (called the cofibrant resolution of $M$). In this case, there is a natural isomorphism for all $p\in\mathbb{Z}$
\[\Hom_{\cd(A)}(M,N[p])\cong H^p\cHom_A (P,N).\]
We refer to \cite[Section 2.12]{KellerYang11} for the definition of cofibrant dg module and remark that if $A$ is an algebra then a bounded complex of finitely generated projective $A$-modules is a cofibrant dg $A$-module.

A dg $A$-module $M$ is \emph{compact} in $\cd(A)$ if $M$ belongs to $\per(A)=\thick(A_A)$, the smallest triangulated subcategory of $\cd(A)$ which contains $A_A$ and which is closed under taking direct summands. It is a \emph{compact generator} of $\cd(A)$ if $\thick(M)=\per(A)$. 
In the case when $A$ is an algebra, $\per(A)$ is triangle equivalent to $K^b(\proj A)$, the homotopy category of bounded complexes of finitely generated projective $A$-modules.
If $M$ is a cofibrant dg $A$-module and is a compact generator of $\cd(A)$, then by \cite[Lemma 6.1 (a)]{Keller94} the two dg algebras $\cEnd_A(M)$ and $A$ are derived equivalent, namely, there is a triangle equivalence
\[?\lten_{\cEnd_A(M)} M: \ \ \cd(\cEnd_A(M))\stackrel{\simeq}{\longrightarrow} \cd(A).\]
Further, two quasi-isomorphic dg algebras are derived equivalent.

\subsection{Hochschild cohomology}\label{ss:Hochschild-cohomologies}
We follow \cite{Keller03}.
Let $A$ be a dg algebra. Then $A$ is a dg $A$-bimodule, in other words, $A$ is a dg module over the enveloping algebra $A^e:=A^{op}\otimes_k A$. For an integer $p$, the $p$-th \emph{Hochschild cohomology} of $A$ is defined as
\[HH^p(A):=\Hom_{\cd(A^e)}(A,A[p]).\]
If $A$ is a finite-dimensional algebra over the field $k$, then $HH^p(A)=\Ext^p_{A^e}(A,A)$, which vanishes for $p<0$ and for $p>\mathrm{gl.dim} A$ (see \cite[Section 1.5]{Happel89}).

Let $A$ be an algebra and $B$ be a dg algebra which is derived equivalent to $A$. Then by \cite[Section 9]{Keller94}, there is a dg $B$-$A$-bimodule $M$ such that $?\lten_B M:\cd(B)\rightarrow\cd(A)$ is a triangle equivalence. It follows from \cite[Corollary 3.2]{Keller03} that there is an isomorphism for all $p\in\mathbb{Z}$
\[HH^p(A)\cong HH^p(B).\]

\subsection{Recollement and stratification}\label{recollement}

A \emph{recollement} of derived categories of algebras is a diagram of triangulated functors
\begin{eqnarray}
\xymatrix{\cd(\Mod B)\ar[rr]|{i_*}&&\cd(\Mod A)\ar[rr]|{j^*}\ar@/^15pt/[ll]|{i^!}\ar@/_15pt/[ll]|{i^*}&&\cd(\Mod C)\ar@/^15pt/[ll]|{j_*}\ar@/_15pt/[ll]|{j_!}}\label{eq:recollement-intro}
\end{eqnarray}
satisfying certain conditions, where $A,B,C$ are algebras. 
This notion was introduced by
 Beilinson, Bernstein and Deligne in \cite{BeilinsonBernsteinDeligne82} and can be considered as a short exact sequence of derived categories. We refer to \cite{BeilinsonBernsteinDeligne82} for the precise conditions and provide only a few properties for later use. For a finite-dimensional algebra $A$, let $r(A)$ denote the number of isomorphism classes of simple $A$-modules.

\begin{lemma}\label{l:recollement}\emph{(\cite{BeilinsonBernsteinDeligne82,AngeleriKoenigLiuYang13})} Suppose there is a recollement of the form (\ref{eq:recollement-intro}). The following hold true:
\begin{itemize}
\item[(a)] $i_*$ and $j_!$ are full embeddings and $\Hom(j_!(X),i_*(Y))=0$ for any $X\in\cd(\Mod C)$ and $Y\in\cd(\Mod B)$.
\item[(b)] $A$ has finite global dimension if and only if so do $B$ and $C$.
\item[(c)] If $A$ is finite-dimensional, then so are $B$ and $C$. In this case, $r(A)=r(B)+r(C)$.
\item[(d)]  If $A$ has finite global dimension, then $j_!(C)\oplus i_*(B)$ is a compact generator of $\cd(\Mod A)$.
\end{itemize}
\end{lemma}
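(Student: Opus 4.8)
The plan is to read off each clause from the defining axioms of a recollement---the two adjoint triples $(i^*,i_*,i^!)$ and $(j_!,j^*,j_*)$, the full faithfulness of $i_*,j_!,j_*$, the identity $j^*i_*=0$, and the two canonical triangles---supplemented by the structural analysis of \cite{AngeleriKoenigLiuYang13}. Clause (a) is then almost immediate: the full embedding property is part of the definition, and for the orthogonality I would use the adjunction $(j_!,j^*)$ together with $j^*i_*=0$ to compute, for $X\in\cd(\Mod C)$ and $Y\in\cd(\Mod B)$,
\[\Hom(j_!(X),i_*(Y))\cong\Hom(X,j^*i_*(Y))=\Hom(X,0)=0.\]

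For clause (d) I would assume $\mathrm{gl.dim}\,A<\infty$ and treat compactness and generation separately. Compactness of $j_!(C)$ is formal: $j^*$ is a left adjoint (of $j_*$) and hence preserves coproducts, so its left adjoint $j_!$ preserves compact objects, and $C_C\in\per(C)$ gives $j_!(C)\in\per(A)$. For $i_*(B)$ I would instead use $\mathrm{gl.dim}\,A<\infty$ in the form $\per(A)=\cd^b(\mod A)$ together with the fact that $B$ is finite-dimensional by (c); invoking the restriction of the recollement to the finite-dimensional derived categories from \cite{AngeleriKoenigLiuYang13} places $i_*(B)$ in $\cd_{fd}(\Mod A)\subseteq\per(A)$. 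For generation I would feed $X=A$ into the canonical triangle $j_!j^*A\to A\to i_*i^*A\to$. Since $i^*$ preserves compact objects (its right adjoint $i_*$ is a left adjoint of $i^!$, hence preserves coproducts) one gets $i^*A\in\per(B)=\thick(B)$, so $i_*i^*A\in\thick(i_*(B))$; the same restriction result gives $j^*A\in\cd_{fd}(\Mod C)\subseteq\per(C)=\thick(C)$, so $j_!j^*A\in\thick(j_!(C))$. The triangle then forces $A_A\in\thick(i_*(B)\oplus j_!(C))$, and since $\per(A)=\thick(A_A)$ this exhibits $j_!(C)\oplus i_*(B)$ as a compact generator.

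Clause (b) and the finite-dimensionality assertion of (c) I would take from \cite{AngeleriKoenigLiuYang13}, the conceptual input being the characterisation $\mathrm{gl.dim}\,A<\infty\Leftrightarrow\per(A)=\cd^b(\mod A)$ and the fact that the six functors respect the relevant subcategories, so that the property $\per=\cd^b$ transfers across the recollement in both directions. For the rank formula in (c) I would use the semiorthogonal decomposition of $\cd(\Mod A)$ by the images of $i_*$ and $j_!$, which follows from the canonical triangles and the orthogonality of (a). Restricting it, as above, to the finite-dimensional derived categories yields a split short exact sequence on Grothendieck groups, whence $K_0(\cd_{fd}(\Mod A))\cong K_0(\cd_{fd}(\Mod B))\oplus K_0(\cd_{fd}(\Mod C))$; identifying $K_0(\cd_{fd}(\Mod D))$ with $K_0(\mod D)\cong\Z^{r(D)}$ for a finite-dimensional algebra $D$ and taking ranks gives $r(A)=r(B)+r(C)$.

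The main obstacle is none of the formal adjunction bookkeeping but the control of boundedness and compactness under the gluing functors: one must know that the recollement of unbounded derived categories restricts to the finite-dimensional (equivalently, under finite global dimension, perfect) derived categories, so that $i_*(B)$, $i^*A$ and $j^*A$ land where the argument needs them. This is exactly the technical content supplied by \cite{AngeleriKoenigLiuYang13}; granting it, every clause follows from the canonical triangles, the adjunctions of the recollement, and the orthogonality established in (a).
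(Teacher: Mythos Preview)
The paper does not prove this lemma at all; it is stated as a citation to \cite{BeilinsonBernsteinDeligne82,AngeleriKoenigLiuYang13} and used as a black box. Your proposal is therefore strictly more detailed than what the paper provides, and the arguments you sketch for each clause are essentially correct and standard.

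One small point worth flagging: in your treatment of (d) you tacitly assume that $A$ is finite-dimensional. You invoke (c) to conclude that $B$ is finite-dimensional, and you use the identification $\per(A)=\cd^b(\mod A)$, both of which need $A$ finite-dimensional rather than merely of finite global dimension. The lemma as stated in the paper only hypothesises finite global dimension on $A$. This is harmless in context, since every application in the paper is to finite-dimensional algebras and the cited reference \cite{AngeleriKoenigLiuYang13} works in that setting; but you should be aware that the compactness of $i_*(B)$ is not a purely formal consequence of the recollement axioms (unlike that of $j_!(C)$), and genuinely uses such finiteness input. Similarly, your argument that $j^*A\in\per(C)$ goes through the restriction to $\cd_{fd}$ and then uses $\mathrm{gl.dim}\,C<\infty$ from (b); a slightly cleaner route, once you know $i_*(B)$ and $j_!(C)$ are compact, is to observe directly that their common right orthogonal in $\cd(\Mod A)$ is zero (via the adjunctions and the triangle $j_!j^*M\to M\to i_*i^*M$), which for compact objects in a compactly generated category is equivalent to $\thick(i_*(B)\oplus j_!(C))=\per(A)$.
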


The recollement (\ref{eq:recollement-intro}) is non-trivial if both $B$ and $C$ are non-trivial. An algebra $A$ is \emph{derived simple} if there are no non-trivial recollements of $\cd(\Mod A)$ of the form (\ref{eq:recollement-intro}). For example, a division algebra is derived simple.

If $A$ is not derived simple, then there is a non-trivial recollement of $\cd(\Mod A)$ of the form (\ref{eq:recollement-intro}). If further $B$ and/or $C$ are not derived simple, we can then take a non-trivial recollement of $\cd(\Mod B)$ and/or $\cd(\Mod C)$. Continuing this process with every occurring algebra unless it is derived simple, eventually we obtain a \emph{stratification} of $\cd(\Mod A)$, which is analogous to composition series. The derived simple algebras obtained in this process are the \emph{simple factors} of the stratification.

Typical examples of stratifications are given for quasi-hereditary algebras (\cite{ClineParshallScott88b,Scott87}). Let $A$ be a finite-dimensional algebra and $e\in A$ an idempotent. The ideal $AeA$ is a \emph{heredity ideal} if $eAe$ is semisimple and $AeA$ is projective as a right $A$-module. Applying \cite[Corollary 3.7]{Cline-Parshall-Scott88} and \cite[Proposition 4.1]{AngeleriKoenigLiuYang13}, we obtain a recollement
\vspace{10pt}
\[
\xymatrix{
\cd(\Mod A/AeA)\ar[rr]&&\cd(\Mod A)\ar[rr]\ar@/^15pt/[ll]\ar@/_15pt/[ll]&&\cd(\Mod eAe).\ar@/^15pt/[ll]\ar@/_15pt/[ll]
\\
}
\vspace{10pt}
\]
The algebra $A$ is \emph{quasi-hereditary} if there is a \emph{heredity chain}, \ie a finite chain of ideals
\[0=J_0\subseteq J_1\subseteq \ldots\subseteq J_m=A\]
such that $J_i/J_{i-1}$ is a heredity ideal of $A/J_{i-1}$ for all $1\leq i\leq m$. We can refine this chain such that in each step the semisimple algebra associated to the heredity ideal is simple. Then by repeating the above construction, we obtain a stratification of $\cd(\Mod A)$.

\subsection{Quasi-hereditary algebras with two simple modules}\label{ss:qh-alg-rank-2}

Let  $x,y\geq 0$ be integers. Let $Q(x,y)$ be the quiver which has two vertices $1$ and $2$ and which has $x$ arrows $\alpha_1,\ldots,\alpha_x$ from $1$ to $2$ and $y$ arrows $\beta_1,\ldots,\beta_y$ from $2$ to $1$. Let $B(x,y)$ be the quotient of the path algebra of $Q(x,y)$ by the ideal generated by the paths  $\beta_q\alpha_p$ for $1\leq p\leq x$ and  $1\leq q\leq y$.
It is easy to check that  the global dimension of $B(x,y)$ is at most $2$ and that $B(x,y)$ is quasi-hereditary with heredity chain
\[0\subseteq B(x,y)e_1 B(x,y) \subseteq B(x,y).\]
Conversely, any quasi-hereditary algebra $A$ with two simple modules whose endomorphism algebras are $k$ is Morita equivalent to some $B(x,y)$, see \cite[Theorem 3.1]{Membrillo-Hernandez94}.

We remark that $B(x,y)$ and $B(y,x)$ are Ringel dual to each other, \ie $B(y,x)$ is the endomorphism algebra of the characteristic tilting module over $B(x,y)$ (see \cite{Ringel91} for the definition of the characteristic tilting module). The proof is direct and is left to the reader as an exercise. A consequence of this fact is that $B(x,y)$ and $B(y,x)$ are derived equivalent.

Next we show that, when $x\geq 2$ and $y\geq 1$, the quasi-hereditary algebra $B(x,y)$ is derived equivalent to an algebra which is not quasi-hereditary. So the class of quasi-hereditary algebras with two simples is not closed under derived equivalence.

Let $P_1$ and $P_2$ be the indecomposable projective $B(x,y)$-module at vertices $1$ and $2$, respectively.
Consider the map
\[f=(\alpha_1,\ldots,\alpha_x)^{\mathrm{tr}}\ :\ P_1\longrightarrow P_2^{\oplus x}.\]
Let $T=\cok(f)\oplus P_2$. It is straightforward to check that $T$ is a tilting module over $B(x,y)$.  (Alternatively, $T$ is the left silting mutation of $P_1\oplus P_2$ at $P_1$ in the sense of Aihara--Iyama \cite{AiharaIyama12}. It follows that $T$ is self-orthogonal and hence is a tilting module.) The endomorphism algebra $E$ of $T$ is the quotient of the path algebra of the quiver which has two vertices $1$ and $2$ and which has $x^2y$ arrows $a^{pq}_r$ ($1\leq p,q\leq x$, $1\leq r\leq y$) from $1$ to $2$ and $x$ arrows $b_p$ ($1\leq p\leq x$) from $2$ to $1$ modulo the ideal generated by the relations
$\sum_{q=1}^x b_q a^{pq}_r$ ($1\leq p\leq x$, $1\leq r\leq y$), $a^{pq}_r b_p=a^{p'q}_r b_{p'}$ ($1\leq p,q,p'\leq x$, $1\leq r\leq y$) and $a^{pq}_r b_{p'}$ ($1\leq p,q,p'\leq x$ with $p\neq p'$, $1\leq r\leq y$). It is readily seen that $E$ is not isomorphic to any $B(x',y')$, and hence is not quasi-hereditary.

\section{The main results}\label{s:the-main-result}

In this section, we prove Theorems~\ref{t:thm1}  and \ref{t:thm2}. We show that a finite-dimensional algebra, which has two simples and whose derived category admits a stratification with simple factors being the base field $k$, is derived equivalent to a certain graded $n$-Kronecker algebra, which is in turn derived equivalent to a quasi-hereditary algebra. The grading of this $n$-Kronecker algebra is determined by comparing Hochschild cohomologies. We start with computing the Hochschild cohomologies of graded $n$-Kronecker algebras.

\subsection{The graded n-Kronecker algebra: Hochschild cohomologies}

Let $n\geq 1$ be a positive integer. Let $Q$ be a graded $n$-Kronecker quiver, namely, the vertex set $Q_0$ consists of two vertices $1$ and $2$, the arrow set $Q_1$ consists of $n$ arrows, which are all from $1$ to $2$, and to each arrow we assign an integer. The corresponding graded path algebra $\Lambda=kQ$ is called a graded $n$-Kronecker algebra. We write $V=\bigoplus_{p\in\mathbb{Z}} V^p$ for the graded vector space of arrows of $Q$. Thus $V^p$ has a basis formed by arrows of $Q$ of degree $p$. Then $\Lambda=kQ_0\oplus V$. Written as a  matrix algebra, $\Lambda$ has the form
$$\Lambda=\begin{pmatrix}  k & 0 \\ V & k \end{pmatrix}.$$

Our aim in this subsection is to calculate the dimension of the Hochschild cohomologies $HH^p(\Lambda)$ of $\Lambda$, viewed as a dg algebra with trivial differential. More precisely, we show the following result.

\begin{proposition} \label{Hochschild-Kronecker} Let $Q$ be a graded $n$-Kronecker quiver and $\Lambda=kQ_0\oplus V$ its graded path algebra. Then
$$\dim_k HH^p(\Lambda) = \begin{cases} \#\{(\alpha,\beta)\in Q_1\times Q_1~|~ \deg\alpha-\deg\beta=0\}-1 & \text{ if }p=1\\
\#\{(\alpha,\beta)\in Q_1\times Q_1~|~ \deg\alpha-\deg\beta=-1\}+1 & \text{ if } p=0\\
\#\{(\alpha,\beta)\in Q_1\times Q_1~|~ \deg\alpha-\deg\beta=p-1\} & \text{ if } p\neq 0,1.\end{cases}$$
\end{proposition}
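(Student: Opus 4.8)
The plan is to compute $HH^*(\Lambda)$ from an explicit, very short projective resolution of $\Lambda$ over its enveloping algebra $\Lambda^e = \Lambda^{\op}\otimes_k\Lambda$, built from the structure of $\Lambda$ as a graded path algebra. Write $S = kQ_0 = ke_1\oplus ke_2$, a separable $k$-algebra, and regard $V = e_2\Lambda e_1$ as a graded $S$-bimodule concentrated in the $(2,1)$-corner. Since all arrows are parallel (from $1$ to $2$), there are no composable pairs of arrows, so $V\otimes_S V = 0$; hence $\Lambda = S\oplus V$ is the tensor algebra $T_S(V)$. The first step is therefore to invoke the standard bimodule resolution of a tensor algebra over a separable subalgebra, namely the short exact sequence of graded $\Lambda^e$-modules
$$0\longrightarrow \Lambda\otimes_S V\otimes_S\Lambda \xrightarrow{\ \mu\ } \Lambda\otimes_S\Lambda\xrightarrow{\ m\ }\Lambda\longrightarrow 0,$$
where $m$ is multiplication and $\mu(a\otimes v\otimes b) = av\otimes b - a\otimes vb$. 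As $S$ is separable, both middle terms are projective $\Lambda^e$-modules, so this is a projective resolution of $\Lambda$, placed in cohomological degrees $0$ and $-1$.

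Next I would compute $HH^p(\Lambda) = \Ext^p_{\Lambda^e}(\Lambda,\Lambda)$ by applying $\Hom_{\Lambda^e}(-,\Lambda)$ and using the adjunction $\Hom_{\Lambda^e}(\Lambda\otimes_S M\otimes_S\Lambda,\Lambda)\cong\Hom_{S^e}(M,\Lambda)$ for an $S$-bimodule $M$. This turns the resolution into a two-term complex $\Hom_{S^e}(S,\Lambda)\xrightarrow{\ \delta\ }\Hom_{S^e}(V,\Lambda)$. The two groups are easy to identify: $\Hom_{S^e}(S,\Lambda)$ is the $S$-centralizer $e_1\Lambda e_1\oplus e_2\Lambda e_2 = ke_1\oplus ke_2$, concentrated in internal degree $0$; and, because any $S$-bimodule map out of $V = e_2\Lambda e_1$ lands in $e_2\Lambda e_1 = V$, one has $\Hom_{S^e}(V,\Lambda)\cong\Hom_k(V,V)$ as graded vector spaces, whose degree-$t$ part has dimension $\sum_q(\dim V^q)(\dim V^{q+t}) = \#\{(\alpha,\beta)\in Q_1\times Q_1 : \deg\beta-\deg\alpha = t\}$. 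The differential $\delta$ is the inner-derivation map: for $z = z_1 e_1 + z_2 e_2$ and $v\in V$ one computes $\delta(z)(v) = zv - vz = (z_2 - z_1)v$, so $\im\delta = k\cdot\mathrm{id}_V\subseteq\Hom_k(V,V)_0$ while $\ker\delta = k\cdot 1$.

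The last step, and the only delicate point, is the bookkeeping of the internal grading against the cohomological shift, which is what produces the three-case answer. Regarding $\Lambda$ as a dg algebra with zero differential, the derived category is that of graded modules, and by the isomorphism $\Hom_{\cd(\Lambda^e)}(\Lambda,\Lambda[p])\cong H^p\cHom_{\Lambda^e}(P,\Lambda)$, applied to the cofibrant resolution $P$ given by the total complex of the sequence above, the total cohomological degree $p$ combines the homological degree of the resolution with the internal degree. Because $\Lambda\otimes_S V\otimes_S\Lambda$ sits in homological degree $1$, its contribution to total degree $p$ comes from the internal-degree-$(p-1)$ part of $\Hom_{S^e}(V,\Lambda)$, whereas $\Lambda\otimes_S\Lambda$ (homological degree $0$) contributes only in internal degree $0$, hence only to $p=0$. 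Reading off the cohomology degree by degree then gives: for $p\neq 0,1$, $HH^p(\Lambda)\cong\Hom_{S^e}(V,\Lambda)_{p-1}$ with vanishing differential, of dimension $\#\{(\alpha,\beta):\deg\alpha-\deg\beta = p-1\}$; for $p=1$, $HH^1(\Lambda) = \Hom_{S^e}(V,\Lambda)_0/k\cdot\mathrm{id}_V$, of dimension $\#\{(\alpha,\beta):\deg\alpha-\deg\beta = 0\}-1$; and for $p=0$, $HH^0(\Lambda) = \ker\delta\oplus\Hom_{S^e}(V,\Lambda)_{-1} = k\cdot 1\oplus\Hom_{S^e}(V,\Lambda)_{-1}$, of dimension $\#\{(\alpha,\beta):\deg\alpha-\deg\beta = -1\}+1$, using the symmetry $\#\{\deg\alpha-\deg\beta = c\} = \#\{\deg\alpha-\deg\beta = -c\}$. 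I expect this sign-and-shift bookkeeping to be the main obstacle; everything else is routine tensor-algebra homological algebra.
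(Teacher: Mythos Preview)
Your proposal is correct and follows essentially the same route as the paper: both use the two-term Koszul (equivalently, tensor-algebra) bimodule resolution $0\to\Lambda\otimes_S V\otimes_S\Lambda\to\Lambda\otimes_S\Lambda\to\Lambda\to 0$, apply $\cHom_{\Lambda^e}(-,\Lambda)$ together with the adjunction to $\cHom_{S^e}(-,\Lambda)$, identify the two terms as $ke_1\oplus ke_2$ and $\bigoplus_{\rho\in Q_1}e_2\Lambda e_1[\deg\rho]\cong\Hom_k(V,V)$, and observe that the induced map has one-dimensional kernel and image. The paper packages the degree bookkeeping as taking cohomology of $\Cone(f^*)[-1]$, which is exactly your ``internal degree $p-1$ in the $V$-term, internal degree $0$ in the $S$-term'' accounting; there is no substantive difference.
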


We give two corollaries before presenting the proof.

\begin{corollary} Let $\Lambda$ and $V$ be the same as in Proposition \ref{Hochschild-Kronecker}.
\begin{enumerate} \label{corollary1-Kronecker}
\item  The equality $\dim_k HH^p(\Lambda)=\dim_k HH^{2-p} (\Lambda)$ holds for all $p\neq 0,1$. And $\dim_k HH^0(\Lambda)=\dim_k HH^2(\Lambda)+1$.

\item $\sum_{p\in\Z} \dim_k HH^p(\Lambda)=(\dim_kV)^2=n^2$.

\item Let $a$ and $b$ be the minimal and the maximal degrees of arrows of $Q$, respectively. Then $HH^p(\Lambda)= 0$ unless $p\in [1-b+a,1+b-a]\cup\{0\}$. Moreover,  $HH^{1-b+a}(\Lambda)\neq 0$ if $1-b+a\leq 0$.
\end{enumerate}
\end{corollary}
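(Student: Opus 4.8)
The plan is to reduce all three parts to two elementary properties of the counting function appearing in Proposition~\ref{Hochschild-Kronecker}. I would first abbreviate, for $d\in\Z$,
\[
\nu(d)=\#\{(\alpha,\beta)\in Q_1\times Q_1\mid \deg\alpha-\deg\beta=d\},
\]
so that the proposition reads $\dim_k HH^1(\Lambda)=\nu(0)-1$, $\dim_k HH^0(\Lambda)=\nu(-1)+1$, and $\dim_k HH^p(\Lambda)=\nu(p-1)$ for $p\neq 0,1$. Next I would record the properties of $\nu$ that do all the work. Swapping the two entries of a pair is a bijection between pairs of difference $d$ and pairs of difference $-d$, whence $\nu(d)=\nu(-d)$. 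Summing over all $d$ counts each ordered pair in $Q_1\times Q_1$ exactly once, so $\sum_{d\in\Z}\nu(d)=|Q_1|^2=n^2$. Finally, since each arrow degree lies in $[a,b]$ the difference $\deg\alpha-\deg\beta$ lies in $[a-b,b-a]$, so $\nu(d)=0$ for $|d|>b-a$, while the pair consisting of an arrow of degree $b$ and one of degree $a$ gives $\nu(b-a)\geq 1$.

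Part (2) is then immediate:
\[
\sum_{p\in\Z}\dim_k HH^p(\Lambda)=\big(\nu(-1)+1\big)+\big(\nu(0)-1\big)+\sum_{p\neq 0,1}\nu(p-1)=\sum_{d\in\Z}\nu(d)=n^2,
\]
where the reindexing $d=p-1$ turns the last sum into $\sum_{d\neq -1,0}\nu(d)$ and the $\pm 1$ cancel. For part (1) the relevant reflection is $p\mapsto 2-p$, which fixes $1$ and interchanges $0$ and $2$. When $p\notin\{0,1,2\}$ both $p$ and $2-p$ avoid $\{0,1\}$, so $\dim_k HH^p(\Lambda)=\nu(p-1)=\nu(1-p)=\dim_k HH^{2-p}(\Lambda)$ by symmetry; the exceptional pair $\{0,2\}$ gives instead $\dim_k HH^0(\Lambda)=\nu(-1)+1=\nu(1)+1=\dim_k HH^2(\Lambda)+1$.

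For part (3) I would combine $\dim_k HH^p(\Lambda)=\nu(p-1)$ (for $p\neq 0,1$) with $\nu(d)=0$ for $|d|>b-a$: this forces $HH^p(\Lambda)=0$ whenever $p\neq 0,1$ and $p-1\notin[a-b,b-a]$, that is $p\notin[1-b+a,1+b-a]$, and since $1$ always lies in this interval and $0$ is adjoined explicitly the stated vanishing follows. The nonvanishing of $HH^{1-b+a}(\Lambda)$ when $1-b+a\leq 0$ I would split according to $b-a$: if $b-a=1$ then $1-b+a=0$ and $\dim_k HH^0(\Lambda)=\nu(-1)+1\geq 1$, whereas if $b-a\geq 2$ then $1-b+a\leq -1$, so $\dim_k HH^{1-b+a}(\Lambda)=\nu(a-b)=\nu(b-a)\geq 1$.

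The only point that needs real care, and hence the main obstacle, is the bookkeeping at the exceptional degrees $p\in\{0,1,2\}$: the $\pm 1$ corrections destroy the naive reflection symmetry precisely on the pair $\{0,2\}$, and in the last nonvanishing claim one must distinguish $b-a=1$, where $1-b+a$ lands on the corrected degree $0$, from $b-a\geq 2$, where it lands in the range governed by the plain formula $\nu(p-1)$.
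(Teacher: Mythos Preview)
Your proof is correct and follows essentially the same approach as the paper, which simply states that the corollary ``follows straightforward from Proposition~\ref{Hochschild-Kronecker}''. You have faithfully supplied the straightforward details: the symmetry $\nu(d)=\nu(-d)$, the total count $\sum_d\nu(d)=n^2$, and the support condition $\nu(d)=0$ for $|d|>b-a$, together with careful bookkeeping at the exceptional degrees $p\in\{0,1,2\}$ and the case split $b-a=1$ versus $b-a\geq 2$ in part~(3).
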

\begin{proof}
These follow straightforward from Proposition~\ref{Hochschild-Kronecker}.
\end{proof}

\begin{corollary} Let $\Lambda$ and $V$ be the same as in Proposition \ref{Hochschild-Kronecker}.
If $\Lambda$ is derived equivalent to an algebra, then $V$ is concentrated in at most two successive degrees. \label{corollary2-Kronecker}
\end{corollary}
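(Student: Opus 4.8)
The plan is to play the vanishing of Hochschild cohomology in negative degrees---which holds for any ordinary algebra---against the nonvanishing statement recorded in Corollary~\ref{corollary1-Kronecker}(3), using the derived invariance of Hochschild cohomology from Subsection~\ref{ss:Hochschild-cohomologies} as the bridge between the two. First I would fix notation: let $a$ and $b$ be the minimal and maximal degrees of the arrows of $Q$, so that the assertion ``$V$ is concentrated in at most two successive degrees'' is exactly the inequality $b-a\leq 1$. I argue by contradiction and assume $b-a\geq 2$, equivalently $1-b+a\leq -1<0$.

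Now suppose $\Lambda$ is derived equivalent to an ordinary algebra $A$. Viewing $\Lambda$ as a dg algebra with trivial differential, the derived invariance recalled in Subsection~\ref{ss:Hochschild-cohomologies} (applied with $B=\Lambda$) yields $HH^p(A)\cong HH^p(\Lambda)$ for all $p\in\Z$. On the other hand, $A$ is concentrated in degree $0$, so $HH^p(A)=\Ext^p_{A^e}(A,A)=0$ for every $p<0$; combining the two gives $HH^p(\Lambda)=0$ for all $p<0$. But Corollary~\ref{corollary1-Kronecker}(3) asserts that $HH^{1-b+a}(\Lambda)\neq 0$ whenever $1-b+a\leq 0$, and under our assumption $1-b+a$ is a negative integer. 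This contradicts the vanishing just obtained, forcing $b-a\leq 1$, as required.

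There is essentially no hard step here once Proposition~\ref{Hochschild-Kronecker} and its corollaries are available; the one point to be careful about is that the negative-degree vanishing $HH^p(A)=0$ for $p<0$ needs only that $A$ be an ordinary (non-dg) algebra, since it is $\Ext$ in negative degrees over the enveloping algebra $A^e$. In particular finite-dimensionality of $A$ is not used in this direction, and the argument is insensitive to the value of $n$.
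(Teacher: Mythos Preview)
Your proof is correct and follows essentially the same approach as the paper: both use the derived invariance of Hochschild cohomology to transfer the negative-degree vanishing $HH^p(A)=0$ for $p<0$ from the ordinary algebra $A$ to $\Lambda$, and then invoke Corollary~\ref{corollary1-Kronecker}(3) to force $1-b+a\geq 0$, i.e.\ $b-a\leq 1$. The only cosmetic difference is that you phrase it as a contradiction while the paper argues directly.
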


\begin{proof}
If $\Lambda$ is derived equivalent to an algebra, then $HH^p(\Lambda)$ vanishes for all $p<0$.
By Corollary \ref{corollary1-Kronecker} (3),  the latter condition implies $1-b+a\geq 0$, namely $b-a\leq 1$, which, by the definition of $a$ and $b$, means exactly that $V$ is concentrated in at most two successive degrees.
\end{proof}

Now we provide a proof of Proposition \ref{Hochschild-Kronecker}, followed by two examples as illustration.

\begin{proof}[Proof of Proposition \ref{Hochschild-Kronecker}]
Recall that $\Lambda^e$ stands for the enveloping algebra $\Lambda^\op\otimes_k \Lambda$ and the $p$-th Hochschild cohomology $HH^p(\Lambda)$ is defined to be $\Hom_{\cd(\Lambda^e)}(\Lambda,\Lambda[p])$. We stress that here $\Lambda$ and $\Lambda^e$ are viewed as dg algebras with trivial differential and $\cd(\Lambda^e)$ is the derived category of dg $\Lambda^e$-modules.

First we construct a cofibrant resolution $P$ of $\Lambda$ as dg $\Lambda^e$-module by using the Koszul bimodule resolution of $\Lambda$. We claim that the following complex is exact and is a projective resolution of $\Lambda$ as graded $\Lambda^e$-module:
\[
\xymatrix@R=0.5pc{0\ar[r] & \Lambda\otimes_{kQ_0}V\otimes_{kQ_0}\Lambda \ar[r]^(0.57){f} & \Lambda\otimes_{kQ_0}\Lambda \ar[r]^(0.65){mult.} & \Lambda \ar[r] & 0.\\
& 1\otimes v\otimes 1 \ar@{|->}[r] & v\otimes 1-1\otimes v}
\]
Indeed, by \cite[Section 3]{VandenBergh94}, this is a projective resolution of $\Lambda$ as $\Lambda^e$-module if we forget the grading. Therefore by definition $P:=\Cone(f)$, where $f$ is viewed as a homomorphism of dg $\Lambda^e$-modules, is cofibrant over $\Lambda^e$ and hence is a cofibrant resolution of $\Lambda$.

It follows that
\[
HH^p(\Lambda)\cong \Hom_{\cd(\Lambda^e)}(\Lambda,\Lambda[p]) \cong H^p{\cHom}_{\Lambda^e}(P,\Lambda).
\] Now $P$ is the mapping cone of $f$, hence
\[
{\cHom}_{\Lambda^e}(P,\Lambda)\cong \cHom_{\Lambda^e}(\Cone(f),\Lambda)\cong \Cone(\cHom_{\Lambda^e}(f,\Lambda))[-1].
\]
Further observe the isomorphisms $\Lambda\otimes_{kQ_0}V\otimes_{kQ_0}\Lambda \cong V\otimes_{kQ_0^e}\Lambda^e$ and $\Lambda\otimes_{kQ_0}\Lambda\cong\Lambda\otimes_{kQ_0}kQ_0\otimes_{kQ_0}\Lambda \cong kQ_0\otimes_{kQ_0^e}\Lambda^e$. So the map $f^*:=\cHom_{\Lambda^e}(f,\Lambda)$ can be calculated explicitly:
\[
\xymatrix{
\cHom_{\Lambda^e}(\Lambda\otimes_{kQ_0}\Lambda,\Lambda) \ar[r]^(0.45){f^*}\ar[d]^{\cong} & \cHom_{\Lambda^e}(\Lambda\otimes_{kQ_0}V\otimes_{kQ_0}\Lambda,\Lambda)\ar[d]^{\cong} \\
\cHom_{\Lambda^e}(kQ_0\otimes_{kQ_0^e}\Lambda^e,\Lambda) \ar[r]\ar[d]^\cong & \cHom_{\Lambda^e}(V\otimes_{kQ_0^e}\Lambda^e,\Lambda)\ar[d]^\cong \\
\cHom_{kQ_0^e}(kQ_0,\Lambda) \ar[r]\ar[d]^\cong & \cHom_{kQ_0^e}(V,\Lambda)\ar[d]^\cong \\
e_1\Lambda e_1\oplus e_2\Lambda e_2  \ar[r]^(0.47)g & \bigoplus_{\rho_\in Q_1} e_2\Lambda e_1[\deg\rho]
}
\]
where $e_1$ and $e_2$ are the two primitive idempotents of $\Lambda$. The middle vertical isomorphisms are obtained by adjunction. The lower vertical isomorphisms are obtained by observing the two isomorphisms of graded $kQ_0^e$-modules: $V=e_2\Lambda e_1\cong \bigoplus_{\rho\in Q_1}((e_2\ten e_1)kQ_0^e)[-\deg\rho]$ and $kQ_0=(e_1\ten e_1)kQ_0^e\oplus (e_2\ten e_2)kQ_0^e$. We have the formula
\[
\dim_k (e_2\Lambda e_1[\deg\rho])^p=\#\{\alpha\in Q_1~|~\deg\alpha-\deg\rho=p\}.
\]
So
\[
\dim_k (\bigoplus_{\rho\in Q_1}e_2\Lambda e_1[\deg\rho])^p=\#\{(\alpha,\beta)\in Q_1\times Q_1~|~\deg\alpha-\deg\beta=p\}.
\]
Finally, the map $g$ in the last row is given by $g(e_1)=(\rho[\deg\rho])_{\rho\in Q_1}=-g(e_2)$. In particular, both the image and the kernel of $g$ are one-dimensional. Since both the domain and the codomain of $g$ have trivial differentials, the desired formula follows.
\end{proof}

\begin{example}
Suppose that we have a graded $3$-Kronecker quiver with $Q_1=\{\alpha_{-1},\alpha_0,\alpha_2\}$ and $\deg\alpha_p=p$ for $p=-1,0,2$. Then $V=V^{-1}\oplus V^0\oplus V^2$, where each $V^p$ is one-dimensional with basis $\alpha_p$.
In this case we have $\cHom_{kQ_0^e}(V,\Lambda)\cong V[-1]\oplus V\oplus V[2]$.
It follows that $$\dim_k HH^p(\Lambda)=\begin{cases} 1 & p=-2,-1,2,3,4 \\
2 & p=0 \\
2 & p=1\\
0 & {\text otherwise}. \end{cases}$$
\end{example}

\begin{example}\label{ex:hochschild-qh} Let $x,y\geq 0$ be integers such that $x+y\geq 1$.
Suppose that we have a graded $n$-Kronecker quiver with $n=x+y$, $Q_1=\{\alpha_{0,1},\ldots,\alpha_{0,y},\alpha_{1,1},\ldots,\alpha_{1,x}\}$ and $\deg\alpha_{p,q}=p$. Then $V=V^{0}\oplus V^1$, where $V^0$ is $y$-dimensional with basis $\{\alpha_{0,1},\ldots,\alpha_{0,y}\}$, and $V^1$ is $x$-dimensional with basis $\{\alpha_{1,1},\ldots,\alpha_{1,x}\}$.
Here $\cHom_{kQ_0^e}(V,\Lambda)\cong V^{\oplus y}\oplus V[1]^{\oplus x}$. It follows that 
$$\dim_k HH^p(\Lambda)=\begin{cases} xy+1 & p=0 \\
x^2+y^2-1 & p=1 \\
xy & p=2\\
0 & {\text otherwise}.\end{cases} $$
\end{example}

\subsection{The main results}

\begin{theorem}\label{t:stratification-implies-qh} Let $A$ be a finite-dimensional $k$-algebra with two simple modules. Assume that $\cd(\Mod A)$ admits a stratification with all simple factors being $k$. Then $A$ is derived equivalent to a quasi-hereditary algebra.
\end{theorem}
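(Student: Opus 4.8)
The plan is to exhibit a dg algebra, derived equivalent to $A$, which is one of the graded $n$-Kronecker algebras of Proposition~\ref{Hochschild-Kronecker}, and then to read off its grading from the Hochschild cohomology of $A$. As $A$ has two simple modules it cannot be derived simple (a derived simple algebra whose simple factor is $k$ would be $k$ itself), so by Lemma~\ref{l:recollement}(c) every non-trivial recollement of $\cd(\Mod A)$ has $r(B)=r(C)=1$; an algebra with a single simple module admits no non-trivial recollement, so the given stratification is a single recollement of $\cd(\Mod A)$ with both outer algebras equal to $k$. Lemma~\ref{l:recollement}(b) then shows that $A$ has finite global dimension, and Lemma~\ref{l:recollement}(d) shows that $M:=j_!(k)\oplus i_*(k)$ is a compact generator of $\cd(\Mod A)$. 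Passing to a cofibrant resolution, $E:=\cEnd_A(M)$ is a dg algebra derived equivalent to $A$.

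Next I would compute $H^*(E)$. Since $i_*$ and $j_!$ are full embeddings and $\Hom(j_!(-),i_*(-))=0$ by Lemma~\ref{l:recollement}(a), among the four corners of the graded endomorphism ring of $M$ only
\[
V:=\bigoplus_{p\in\Z}\Hom_{\cd(\Mod A)}(i_*(k),j_!(k)[p])
\]
can be non-zero, and it is finite-dimensional because $A$ has finite global dimension. As two morphisms lying in the same corner cannot be composed, $H^*(E)$ is precisely a graded $n$-Kronecker algebra $\Lambda=kQ_0\oplus V$ with $n=\dim_k V$. The crucial point is that $E$ is \emph{formal}. Working relative to the semisimple subalgebra $kQ_0=ke_1\times ke_2$, whose idempotents lift to $E$ as the summand projections of $M$, one has $V\otimes_{kQ_0}V=0$ because $V$ occupies a single corner; hence in the minimal $A_\infty$-model of $E$ every higher product $m_n$ with $n\geq 3$ is reduced and $kQ_0$-bilinear, so it factors through $V^{\otimes_{kQ_0}n}=0$ and vanishes. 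Thus $E$ is quasi-isomorphic to $\Lambda$ with trivial differential, and $A$ is derived equivalent to $\Lambda$.

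I would then pin down the grading by comparing Hochschild cohomologies: since $A$ is finite-dimensional, $HH^p(A)=0$ for $p<0$, whence $HH^p(\Lambda)=0$ for $p<0$ as $\Lambda$ is derived equivalent to $A$. By Corollary~\ref{corollary2-Kronecker} this forces $V$ to be concentrated in at most two successive degrees, and after a global degree shift (itself a derived equivalence) I may assume $V=V^0\oplus V^1$ with $\dim_k V^0=y$ and $\dim_k V^1=x$, so that $\Lambda$ is the algebra of Example~\ref{ex:hochschild-qh}. Finally I would realize $\Lambda$ as a quasi-hereditary algebra up to derived equivalence, mimicking the silting mutation of Section~\ref{ss:qh-alg-rank-2}. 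Let $P_1,P_2$ be the indecomposable projective dg $\Lambda$-modules and let $\mu\colon P_1\to P_2^{\oplus x}[1]$ be the morphism whose $x$ components form a basis of the degree-one arrows; set $T:=T_1\oplus P_2$, where $T_1$ sits in the triangle $P_2^{\oplus x}\to T_1\to P_1\xrightarrow{\mu}P_2^{\oplus x}[1]$. A direct computation with the long exact sequences of this triangle, using $\Hom_{\cd(\Lambda)}(P_i,P_j[p])=(e_j\Lambda e_i)^p$ and that $\mu$ induces an isomorphism on the degree-one corner, shows $\Hom_{\cd(\Lambda)}(T,T[p])=0$ for $p\neq 0$; hence $T$ is a tilting object, and its endomorphism algebra has two vertices joined by $x$ arrows in one direction and $y$ in the other, with the single relation inherited from $V\cdot V=0$. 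This algebra is $B(x,y)$, which is quasi-hereditary, so $A$ is derived equivalent to a quasi-hereditary algebra. (When $V$ lies in a single degree one shifts it to degree $0$ and takes $\Lambda=B(n,0)$, which is hereditary.)

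The step I expect to be the main obstacle is the formality of $E$: it is exactly what permits the Hochschild computation of Proposition~\ref{Hochschild-Kronecker} and Corollary~\ref{corollary2-Kronecker}, carried out for the graded Kronecker algebra with trivial differential, to be transported to $A$ and thereby to confine $V$ to two successive degrees. Once formality is secured the concluding identification of $\Lambda$ with $B(x,y)$ is routine, paralleling the mutation already described in Section~\ref{ss:qh-alg-rank-2}.
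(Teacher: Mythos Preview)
Your overall strategy matches the paper's: reduce the stratification to a single recollement with both sides $k$, pass to the dg endomorphism algebra of $j_!(k)\oplus i_*(k)$, establish formality so that this dg algebra is a graded Kronecker algebra $\Lambda$, invoke Corollary~\ref{corollary2-Kronecker} to confine $V$ to two consecutive degrees, and then identify the result with $B(x,y)$.

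The execution differs at two points. For formality, you appeal to the minimal $A_\infty$-model over $kQ_0$ and the vanishing $V\otimes_{kQ_0}V=0$; the paper instead chooses $V$ as a subcomplex of cycles in $\cHom_A(i_*(k),j_!(k))$ mapping isomorphically onto cohomology and observes that the inclusion
\[
\begin{pmatrix}k&0\\V&k\end{pmatrix}\hookrightarrow\cEnd_A(T)
\]
is then a quasi-isomorphism of dg algebras. This is more elementary and avoids $A_\infty$-machinery, though your argument is also correct. For the final identification with $B(x,y)$, you construct a tilting object in $\cd(\Lambda)$ by mutation, whereas the paper simply runs the \emph{same} construction on $B(x,y)$ using its heredity recollement ($i_*(k)=B/Be_1B$, $j_!(k)=e_1B$), computes $\dim\Hom=y$ and $\dim\Ext^1=x$, and concludes that $B(x,y)$ is derived equivalent to the very $\Lambda$ already obtained from $A$. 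This is considerably shorter. Your mutation works, but the phrase ``the single relation inherited from $V\cdot V=0$'' is not quite right: the relations $\beta_j\alpha_i=0$ in $\End(T)$ come from the fact that consecutive maps in the triangle $P_2^{\oplus x}\to T_1\to P_1$ compose to zero, not from $V\cdot V=0$ in $\Lambda$.
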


\begin{proof} First we show that  such an algebra $A$ must be derived equivalent to a graded $n$-Kronecker algebra $\Lambda$.
By Lemma \ref{l:recollement} (c), a stratification of $\cd(\Mod A)$ has precisely two simple factors, namely, it is a recollement of the form
\begin{eqnarray}
\xymatrix{\cd(\Mod k)\ar[rr]|{i_*}&&\cd(\Mod A)\ar[rr]|{j^*}\ar@/^15pt/[ll]|{i^!}\ar@/_15pt/[ll]|{i^*}&&\cd(\Mod k).\ar@/^15pt/[ll]|{j_*}\ar@/_15pt/[ll]|{j_!}}\label{eq:recollement-in-main-thm}
\end{eqnarray}
Take $T=i_*(B)\oplus j_!(C)\in\cd(\Mod A)$. By Lemma \ref{l:recollement} (d), $T$ is a compact generator of $\cd(\Mod A)$. We may assume that $T$ is a bounded complex of finitely generated projective $A$-modules, in particular, it is a cofibrant dg $A$-module. Then $A$ is derived equivalent to the dg endomorphism algebra $\tilde{\Lambda}=\cEnd_A(T)$. Written as a matrix algebra, $\tilde{\Lambda}$ has the form
\[
\tilde{\Lambda}=\begin{pmatrix} \cEnd_A(i_*(k)) & \cHom_A(j_!(k),i_*(k)) \\ \cHom_A(i_*(k),j_!(k)) & \cEnd_A(j_!(k)) \end {pmatrix}.
\]
Now by Lemma \ref{l:recollement} (a), the space $\Hom_{\cd(\Mod A)}(i_*(k),i_*(k)[p])$ vanishes if $p\neq 0$ and is $k$ if $p=0$. Therefore, since $i_*(k)$ is cofibrant over $A$, the dg algebra $\cEnd_A(i_*(k))$ is quasi-isomorphic to $k$. It follows that the embedding $k\cong k\{\mathrm{id}_{i_*(k)}\}\hookrightarrow \cEnd_A(i_*(k))$ is a quasi-isomorphism of dg algebras. For the same reason,
 the embedding $k\cong k\{\mathrm{id}_{j_!(k)}\}\hookrightarrow\cEnd_A(j_!(k))$ is a quasi-isomorphism of dg algebras, and similarly, the complex $\cHom_A(j_!(k),i_*(k))$ is acyclic. Further, we can take a subcomplex $V$ of $\cHom_A(i_*(k),j_!(k))$ such that the embedding $V\hookrightarrow \cHom_A(i_*(k),j_!(k))$ is a quasi-isomorphism of complexes of vector spaces (thus as a graded vector space $V$ is isomorphic to $\bigoplus_{p\in\mathbb{Z}}\Hom_{\cd(\Mod A)}(i_*(k),j_!(k)[p])$). It follows that the embedding
\[
\Lambda=\left(\begin{array}{cc} k & 0 \\ V & k \end{array}\right)
\hookrightarrow
\tilde{\Lambda}
\]
is a quasi-isomorphism of dg algebras.
Therefore, $\Lambda$, viewed as a dg algebra with trivial differential, is derived equivalent to $\tilde{\Lambda}$, and hence it is derived equivalent to $A$.

Now by Corollary \ref{corollary2-Kronecker}  the graded vector space $V$ is concentrated in at most two successive degrees. We can assume that $V$ is concentrated in degrees $0$ and $1$, \ie $V=V^0\oplus V^1$, by shifting $i_*(B)$ in the definition of $T$ if necessary. 

Next we consider the quasi-hereditary algebra $B=B(x,y)$ for $x=\dim_k V^1$ and $y=\dim_k V^0$ (see Section \ref{ss:qh-alg-rank-2}). The derived category $\cd(\Mod B)$ admits a recollement of the form (\ref{eq:recollement-in-main-thm}),
and here $i_*(k)=B/Be_1B$ and $j_!(k)=e_1B$. Direct computation shows that $\Ext^p_B(B/Be_1B,e_1B)\neq 0$ only for $p=0$ and $1$; Moreover, $\dim_k\Hom_B(B/Be_1B,e_1B)=y$ and $\dim_k\Ext^1_B(B/Be_1B,e_1B)=x$. Therefore, we have isomorphisms of graded vector spaces 
\[
\bigoplus_{p\in\mathbb{Z}}\Hom_{\cd(\Mod B)}(B/Be_1B,e_1B[p])\cong\bigoplus_{p\in\mathbb{Z}}\Ext^p_B(B/Be_1B,e_1B)\cong k^{\oplus y}\oplus k^{\oplus x}[-1].\] Applying the above argument, one sees that $B$ is derived equivalent to the graded $n$-Kronecker algebra
$$\begin{pmatrix} k & 0 \\
k^{\oplus y}\oplus k^{\oplus x}[-1] & k \end{pmatrix}.$$
This is precisely the algebra $\Lambda$ we have constructed above.

To summarise, the algebra $A$ is derived equivalent to a graded $n$-Kronecker algebra, which is in turn derived equivalent to a quasi-hereditary algebra $B$. We are done.
\end{proof}

Next we prove the following dichotomy.

\begin{theorem}\label{t:dichotomy}
Let $k$ be algebraically closed and $A$ be a finite-dimensional  $k$-algebra with two simple modules. Assume that $A$ has finite global dimension. Then $A$ is either derived simple or derived equivalent to a quasi-hereditary algebra.
\end{theorem}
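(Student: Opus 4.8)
The plan is to prove Theorem~\ref{t:dichotomy} by reducing to Theorem~\ref{t:stratification-implies-qh} (=\ref{t:thm1}) through a dichotomy on whether $A$ is derived simple. Suppose $A$ is \emph{not} derived simple. Then there is a non-trivial recollement of $\cd(\Mod A)$ of the form~(\ref{eq:recollement-intro}) with algebras $B$ and $C$ in the outer positions. Since $A$ is finite-dimensional, Lemma~\ref{l:recollement}~(c) gives that $B$ and $C$ are finite-dimensional with $r(A)=r(B)+r(C)$; as $r(A)=2$ and both $B,C$ are non-trivial, we must have $r(B)=r(C)=1$. Moreover, by Lemma~\ref{l:recollement}~(b), since $A$ has finite global dimension, so do $B$ and $C$. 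A finite-dimensional algebra of finite global dimension with a single simple module is (semi)simple, and here, since $k$ is algebraically closed, the unique endomorphism division algebra of the simple module is $k$ itself. Hence $B\cong k$ and $C\cong k$.

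With this in hand, the two outer algebras of the recollement are both $k$, so $B$ and $C$ are each derived simple (a division algebra is derived simple, as noted after Lemma~\ref{l:recollement}). This means the recollement already \emph{is} a stratification of $\cd(\Mod A)$, with both simple factors equal to $k$. Therefore the hypotheses of Theorem~\ref{t:stratification-implies-qh} are satisfied, and we conclude that $A$ is derived equivalent to a quasi-hereditary algebra.

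Combining the two cases yields the dichotomy: either $A$ is derived simple, or (when it is not) the preceding paragraph shows $A$ is derived equivalent to a quasi-hereditary algebra. The only point requiring care is the step identifying the outer algebras as $k$. The subtlety is that \emph{a priori} one only knows $B$ and $C$ are finite-dimensional of finite global dimension with one simple module each; one must invoke algebraic closedness of $k$ to force the endomorphism ring of the simple to be $k$ (ruling out a larger division algebra), and finite global dimension together with $r=1$ to force semisimplicity, so that $B\cong C\cong k$. I expect this identification to be the main (and essentially the only) obstacle; once it is settled, the conclusion follows immediately by feeding the resulting length-two stratification into Theorem~\ref{t:stratification-implies-qh}. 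I would therefore state and verify carefully that a finite-dimensional $k$-algebra of finite global dimension with a single simple module, over an algebraically closed field, is isomorphic to $k$, and then assemble the argument as above.
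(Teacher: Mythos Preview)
Your proposal is correct and follows essentially the same route as the paper: assume $A$ is not derived simple, use Lemma~\ref{l:recollement}~(b),(c) to see that the outer algebras $B,C$ are finite-dimensional local algebras of finite global dimension, hence Morita equivalent to $k$ over an algebraically closed field, and then feed the resulting stratification into Theorem~\ref{t:stratification-implies-qh}. One small imprecision: a finite-dimensional algebra with one simple module and finite global dimension is only \emph{Morita equivalent} to $k$ (it could be a matrix algebra $M_n(k)$), not literally isomorphic to $k$; the paper handles this by saying ``without loss of generality we may assume $B=C=k$'', which is harmless since Morita equivalent algebras have equivalent derived categories.
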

\begin{proof}
Suppose that $A$ is not derived simple. Then the derived category $\cd(\Mod A)$ admits a  recollement
\begin{eqnarray*}
\xymatrix{\cd(\Mod B)\ar[rr]|{i_*}&&\cd(\Mod A)\ar[rr]|{j^*}\ar@/^15pt/[ll]|{i^!}\ar@/_15pt/[ll]|{i^*}&&\cd(\Mod C),\ar@/^15pt/[ll]|{j_*}\ar@/_15pt/[ll]|{j_!}}\end{eqnarray*}
where $B$ and $C$ are non-trivial algebras. It follows from Lemma~\ref{l:recollement} that $B$ and $C$ are finite-dimensional local algebras of finite global dimension. Thus $B$ and $C$ are Morita equivalent to $k$, since the base field $k$ is algebraically closed. Without loss of generality we may assume that $B$ and $C$ are the field $k$. As a consequence, the above recollement is a stratification of $\cd(\Mod A)$ with simple factors being $k$. By Theorem \ref{t:stratification-implies-qh}, $A$ is derived equivalent to a quasi-hereditary algebra.
\end{proof}

Some corollaries are given below.

For $x,y\geq 0$, consider  the graded $(x+y)$-Kronecker quiver with $x+y$ arrows from $1$ to $2$, $x$ of which are in degree $0$ and $y$ of which are in degree $1$. Let $\Lambda(x,y)$ denote the graded path algebra of this quiver. If $x=y=0$, then $\Lambda(0,0)=B(0,0)$ is the product of two copies of the field $k$. It is known that $HH^0(B(0,0))=k\times k$ and $HH^p(B(0,0))$ vanishes for $p\geq 1$. In the proof of Theorem~\ref{t:stratification-implies-qh} we have shown that the quasi-hereditary algebra $B(x,y)$ with $x+y\geq 1$ is derived equivalent to $\Lambda(y,x)$. Thus we have by Example~\ref{ex:hochschild-qh}

\begin{corollary}\label{c:hochschild-qh}  
The dimensions of the Hochschild cohomologies of $B(x,y)$ is given by
\begin{eqnarray*}  \dim_k HH^p(B(x,y))&=&\begin{cases} xy+1 & p=0 \\
x^2+y^2-1 & p=1 \\
xy & p=2\\
0 & {\text otherwise}\end{cases}\qquad \text{if $x+y\geq 1$,}\\
\dim_k HH^p(B(0,0))&=&\begin{cases} 2 & p=0\\
0 & {\text otherwise}.
\end{cases}
\end{eqnarray*}
\end{corollary}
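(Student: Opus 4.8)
The plan is to deduce everything from two facts already in hand: the derived invariance of Hochschild cohomology recorded in Section~\ref{ss:Hochschild-cohomologies}, and the explicit computation for graded $n$-Kronecker algebras in Proposition~\ref{Hochschild-Kronecker}/Example~\ref{ex:hochschild-qh}. I would split into the two cases $x+y\geq 1$ and $x=y=0$ exactly as the statement is phrased.

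For $x+y\geq 1$, I would invoke the derived equivalence exhibited inside the proof of Theorem~\ref{t:stratification-implies-qh}: there $B(x,y)$ is shown to be derived equivalent to the graded path algebra $\Lambda(y,x)$, whose arrow space $V$ satisfies $\dim_k V^0=y$ and $\dim_k V^1=x$ (coming from $\dim_k\Hom_B(B/Be_1B,e_1B)=y$ and $\dim_k\Ext^1_B(B/Be_1B,e_1B)=x$). Since $B(x,y)$ is an ordinary algebra and $\Lambda(y,x)$ is a dg algebra derived equivalent to it, Section~\ref{ss:Hochschild-cohomologies} gives $HH^p(B(x,y))\cong HH^p(\Lambda(y,x))$ for all $p$. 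It then only remains to read off the right-hand side, which is literally the content of Example~\ref{ex:hochschild-qh}: one gets $\dim_k HH^0=xy+1$, $\dim_k HH^1=x^2+y^2-1$, $\dim_k HH^2=xy$, and $0$ otherwise. Note that the formula is symmetric in $x$ and $y$ (both $xy$ and $x^2+y^2$ are), so the swap between $B(x,y)$ and $\Lambda(y,x)$ introduces no discrepancy in the final dimensions.

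For the remaining case, $B(0,0)=k\times k$ is a commutative separable $k$-algebra, so its Hochschild cohomology is concentrated in degree $0$, where it equals the center $k\times k$ of dimension $2$, and vanishes in all positive degrees; this yields the second displayed formula. I do not expect a genuine obstacle here: the entire corollary is a direct consequence of the derived equivalence $B(x,y)\simeq\Lambda(y,x)$ together with the $HH^\bullet$ computation already carried out. The only point requiring care is the bookkeeping of the grading convention, namely verifying that $\Ext^0$ contributes to $V^0$ (dimension $y$) and $\Ext^1$ to $V^1$ (dimension $x$) so that the instance of Example~\ref{ex:hochschild-qh} is applied with the correct values, and separating off the degenerate case $B(0,0)$, which is not covered by the generic $n$-Kronecker computation.
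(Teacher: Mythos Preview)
Your proposal is correct and follows essentially the same approach as the paper: for $x+y\geq 1$ the paper also invokes the derived equivalence $B(x,y)\simeq\Lambda(y,x)$ established in the proof of Theorem~\ref{t:stratification-implies-qh} and then reads off the dimensions from Example~\ref{ex:hochschild-qh}, while for $B(0,0)=k\times k$ it appeals to the well-known computation of $HH^\bullet$ for this semisimple algebra. Your added remarks on the symmetry in $x,y$ and the grading bookkeeping are helpful but not strictly needed, since Example~\ref{ex:hochschild-qh} is stated with exactly the parameters that arise.
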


There is the following sufficient condition for derived simplicity for finite-dimensional algebras with two simple modules and of finite global dimension.

\begin{corollary}\label{c:derived-simplicity-from-hoch-coh}
Let $k$ be algebraically closed and $A$ a finite-dimensional $k$-algebra with two simple modules. Assume that $A$ has finite global dimension. If $HH^p(A)\neq 0$ for some $p>2$, then $A$ is derived simple.
\end{corollary}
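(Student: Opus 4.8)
The plan is to derive the corollary from the contrapositive of Theorem~\ref{t:thm2} (=\ref{t:dichotomy}) combined with the Hochschild cohomology computation in Corollary~\ref{c:hochschild-qh}. The key observation is that Hochschild cohomology is a derived invariant: by the discussion in Section~\ref{ss:Hochschild-cohomologies}, if $A$ is derived equivalent to an algebra $B$, then $HH^p(A)\cong HH^p(B)$ for all $p\in\mathbb{Z}$. So the hypothesis $HH^p(A)\neq 0$ for some $p>2$ is itself a derived-invariant condition, which is what makes it compatible with a dichotomy stated up to derived equivalence.

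First I would invoke Theorem~\ref{t:dichotomy}: since $k$ is algebraically closed, $A$ is finite-dimensional with two simple modules, and $A$ has finite global dimension, we know that $A$ is either derived simple or derived equivalent to a quasi-hereditary algebra. To prove $A$ is derived simple, it therefore suffices to rule out the second alternative. Suppose for contradiction that $A$ is derived equivalent to a quasi-hereditary algebra $A'$ with two simple modules. Since $k$ is algebraically closed, the endomorphism algebras of the simple $A'$-modules are $k$, so by the classification recalled in Section~\ref{ss:qh-alg-rank-2} (via \cite[Theorem 3.1]{Membrillo-Hernandez94}), $A'$ is Morita equivalent to some $B(x,y)$. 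As Morita equivalent algebras are in particular derived equivalent, $A$ is derived equivalent to $B(x,y)$, and hence $HH^p(A)\cong HH^p(B(x,y))$ for all $p$.

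Now I would apply Corollary~\ref{c:hochschild-qh}, which shows that $HH^p(B(x,y))=0$ for every $p>2$ (regardless of the values of $x,y$, including the case $x=y=0$). This contradicts the hypothesis that $HH^p(A)\neq 0$ for some $p>2$. Therefore the second alternative of the dichotomy cannot occur, and $A$ must be derived simple, completing the proof.

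I do not anticipate a serious obstacle here, as the argument is a short deduction from results already established in the excerpt. The only point requiring minor care is the reduction from a general quasi-hereditary algebra with two simples to the normal form $B(x,y)$: one must note that the algebraic closedness of $k$ forces the endomorphism rings of the simples to be $k$, which is precisely the hypothesis under which the classification of Section~\ref{ss:qh-alg-rank-2} applies, and that Morita equivalence preserves both derived equivalence class and Hochschild cohomology.
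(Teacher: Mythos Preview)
Your proof is correct and follows essentially the same route as the paper: argue by contrapositive via Theorem~\ref{t:dichotomy}, reduce to $B(x,y)$, and invoke Corollary~\ref{c:hochschild-qh} together with the derived invariance of Hochschild cohomology. You are merely more explicit than the paper in justifying why the quasi-hereditary algebra in the dichotomy can be taken to be some $B(x,y)$ (using algebraic closedness to force $\End$ of simples to be $k$ and then the classification of Section~\ref{ss:qh-alg-rank-2}), but this is the same argument with details spelled out.
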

\begin{proof}
Suppose that $A$ is not derived simple. Then by Theorem~\ref{t:dichotomy}, $A$ is derived equivalent to some quasi-hereditary algebra with two simples, \ie to some $B(x,y)$. By Corollary~\ref{c:hochschild-qh}, the space $HH^p(A)$ vanishes for all $p>2$.
\end{proof}

\begin{corollary}
There are natural one-to-one correspondences among the following sets:
\begin{itemize}
\item[(i)] Morita equivalence classes of quasi-hereditary algebras with two simple modules whose endomorphism algebras are isomorphic to $k$, modulo Ringel duality;
\item[(ii)] $B(x,y)$ for $y\geq x\geq 0$;
\item[(iii)] graded $n$-Kronecker algebras $\Lambda(x,y)$ for $x\geq y\geq 0$;
\item[(iv)] derived equivalence classes of finite-dimensional algebras with two simple modules such that the derived category admits a stratification with all simple factors being $k$.
\end{itemize}
\end{corollary}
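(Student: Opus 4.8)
The plan is to exhibit explicit maps between consecutive sets and verify that each is a bijection. Almost all the ingredients have already been assembled in the preceding sections, so the genuine content reduces to a single injectivity statement proved by comparing Hochschild cohomologies.

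I would first treat (i)$\leftrightarrow$(ii). By \cite[Theorem 3.1]{Membrillo-Hernandez94} every quasi-hereditary algebra with two simples whose endomorphism algebras are $k$ is Morita equivalent to some $B(x,y)$; since the quiver $Q(x,y)$ records $x$ and $y$ as the numbers of arrows in the two directions, distinct pairs $(x,y)$ give non-Morita-equivalent algebras, so this assignment is a bijection onto all of the $B(x,y)$. Ringel duality identifies $B(x,y)$ with $B(y,x)$ (Section~\ref{ss:qh-alg-rank-2}), so passing to Ringel-duality classes amounts to remembering only the unordered pair $\{x,y\}$; choosing the representative with $y\geq x$ then gives a bijection from (i) to (ii). Next, (ii)$\leftrightarrow$(iii) is a relabelling realised by a derived equivalence: the computation preceding Corollary~\ref{c:hochschild-qh} shows that $B(x,y)$ is derived equivalent to $\Lambda(y,x)$, and the index bijection $(x,y)\mapsto(y,x)$ carries $\{y\geq x\}$ onto the index set $\{x\geq y\}$ of (iii).

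Finally I would attack (iii)$\leftrightarrow$(iv) by sending each $\Lambda(x,y)$ to its derived equivalence class. Surjectivity is exactly the content of the proof of Theorem~\ref{t:stratification-implies-qh}: any algebra whose derived category carries such a stratification is derived equivalent, after a shift, to some graded $n$-Kronecker algebra concentrated in two successive degrees, that is, to some $\Lambda(x,y)$; and using the chain of derived equivalences $\Lambda(x,y)\simeq B(y,x)\simeq B(x,y)\simeq\Lambda(y,x)$ one may normalise to $x\geq y$. The one point requiring an actual argument, and the crux of the whole corollary, is \emph{injectivity} of this last map: I must show that $\Lambda(x,y)$ and $\Lambda(x',y')$ (both normalised so that the first index dominates) are derived equivalent only when $(x,y)=(x',y')$.

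The hard part, then, is distinguishing non-isomorphic classes, and for this I would invoke the derived invariance of Hochschild cohomology (Section~\ref{ss:Hochschild-cohomologies}) together with the dimensions computed in Corollary~\ref{c:hochschild-qh}. For $x+y\geq 1$ the numbers $\dim_k HH^2(B(x,y))=xy$ and $\dim_k HH^1(B(x,y))+1=x^2+y^2$ can be read off the cohomology, and the pair $(xy,\,x^2+y^2)$ determines the unordered pair $\{x,y\}$; the single degenerate class $B(0,0)=k\times k$ is separated by hand, being the unique one with $\dim_k HH^0=2$ yet $\dim_k HH^2=0$. Hence derived equivalent $\Lambda(x,y)$ and $\Lambda(x',y')$ must satisfy $\{x,y\}=\{x',y'\}$, and the normalisation $x\geq y$ forces equality. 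Composing the three bijections yields the asserted chain of one-to-one correspondences.
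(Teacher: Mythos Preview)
Your proposal is correct and follows essentially the same approach as the paper: surjectivity onto (iv) comes from the proof of Theorem~\ref{t:stratification-implies-qh}, and injectivity is established by reading off the unordered pair $\{x,y\}$ from the derived-invariant Hochschild dimensions of Corollary~\ref{c:hochschild-qh}. The only organisational difference is in the step (i)$\leftrightarrow$(ii): you distinguish the $B(x,y)$ up to Morita equivalence directly via their Gabriel quivers, whereas the paper deduces Morita inequivalence from the already-proved derived inequivalence; both arguments are straightforward and valid.
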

\begin{proof}
{\rm [(ii)$\leftrightarrow$(iii)]} The map $B(x,y)\mapsto \Lambda(y,x)$ is one-to-one and onto.

{\rm [(ii)$\leftrightarrow$(iv)]} It follows from Theorem~\ref{t:stratification-implies-qh} that in each derived equivalence class in (iv) 
there is an algebra of the form $B(x,y)$ for some $x,y\geq 0$. Since $B(x,y)$ and $B(y,x)$ are derived equivalent, we can choose $x$ and $y$ such that $y\geq x$. It remains to show that $B(x',y')$ is derived equivalent to $B(x,y)$ if and only if $(x',y')=(x,y)$ or $(x',y')=(y,x)$. Suppose that $B(x,y)$ and $B(x',y')$ are derived equivalent. Then they have isomorphic Hochschild cohomologies. Thus by Corollary~\ref{c:hochschild-qh} we have
\[
\begin{cases}
xy+1=x'y'+1\\
x^2+y^2-1=x'^2+y'^2-1\\
xy=x'y'
\end{cases}  
\text{ or\ \ \ } x=y=0=x'=y'.
\]
Solving this equation we obtain $(x',y')=(x,y)$ or $(x',y')=(y,x)$.

{\rm [(i)$\leftrightarrow$(ii)]} First, in each Morita equivalence class in (i) 
there is an algebra of the form $B(x,y)$ for some $x,y\geq 0$. Secondly, as remarked in Section~\ref{ss:qh-alg-rank-2}, the algebras $B(x,y)$ and $B(y,x)$ are Ringel dual to each other. Finally, if $y\geq x\geq 0$, $y'\geq x'\geq 0$ and $(x,y)\neq (x',y')$, then $B(x,y)$ and $B(x',y')$ are not derived equivalent, and hence they are not Morita equivalent.
\end{proof}


\section{An example}\label{s:example}
Let $k$ be algebraically closed.
In this section, we construct a family $A_n(\underline{x},\underline{y})$ ($n\geq 0$, $\underline{x},\underline{y}$ are sequences of positive integers) of algebras with two simples. We will show that if $n\leq 2$ then the algebra $A_n(\underline{x},\underline{y})$ is isomorphic to a quasi-hereditary algebra; otherwise, it is derived simple.

When $\underline{x}=\underline{y}=(1,1,\ldots)$, the algebra $A_n(\underline{x},\underline{y})$ is the same as the Fibonacci algebra $A_n$ constructed by E. Green \cite{Green80} and further studied by Happel \cite{Happel91b} and Membrillo-Hern\'andez \cite{Membrillo-Hernandez94}. In \cite{Happel91b,Membrillo-Hernandez94}, it is shown that $A_n$ ($n\geq 3$) is not derived equivalent to quasi-hereditary algebras. The method used in \cite{Membrillo-Hernandez94} works for our general $A_n(\ulx,\uly)$ with $n$ even but not for $n$ odd. In this section we provide a different method, which uses the top Hochschild cohomology.

\subsection{The construction}

Let $\ul{x}=(x_1,x_2,\ldots),\,\ul{y}=(y_1,y_2,\ldots)$ be two sequences of positive integers. We define a family of algebras $A_n(\ulx,\uly)$ recursively as follows.

$A_0(\ulx,\uly)$ is the path algebra of the quiver with two vertices $1$ and $2$ and without arrows. Namely, $A_0(\ulx,\uly)=k\oplus k$.

$A_1(\ulx,\uly)$ is the path algebra of the $x_1$-Kronecker quiver: it has two vertices $1$ and $2$, and has  $x_1$ arrows, all of which go from $1$ to $2$.

For $m\geq 1$, the algebra $A_{2m}(\ulx,\uly)$ is
obtained from $A_{2m-1}(\underline{x},\underline{y})$ by adding $y_m$ new arrows $\beta_{m,j}$ ($1\leq j\leq y_m$) from $2$ to $1$ and adding as new relations the paths $\beta_{m,j}\alpha_{i',j'}$ ($1\leq j\leq y_m$, $1\leq i'\leq m$ and $1\leq j'\leq x_{i'}$); the algebra $A_{2m+1}(\ulx,\uly)$ is obtained from $A_{2m}(\ulx,\uly)$ by adding $x_{m+1}$ new arrows $\alpha_{m+1,j}$ ($1\leq j\leq x_{m+1}$) from $1$ and $2$ and adding as new relations the paths $\alpha_{m+1,j}\beta_{i',j'}$ ($1\leq j\leq x_{m+1}$, $1\leq i'\leq m$ and $1\leq j'\leq y_{i'}$).

In other words, the quiver of $A_{n}(\ulx,\uly)$ has two vertices $1$ and $2$ and has $x_1+\ldots+x_{\lfloor \frac{n+1}{2}\rfloor}$ arrows from $1$ to $2$ labelled by $\alpha_{i,j}$ ($1\leq i\leq \lfloor\frac{n+1}{2}\rfloor$ and $1\leq j\leq x_i$) and $y_1+\ldots+y_{\lfloor\frac{n}{2}\rfloor}$ arrows labelled by $\beta_{i,j}$ ($1\leq i\leq \lfloor\frac{n}{2}\rfloor$ and $1\leq j\leq y_i$). Here for a real number $r$ we denote by $\lfloor r\rfloor$ the greatest integer smaller than or equal to $r$. The relations of $A_n(\ulx,\uly)$ are $\alpha_{i,j}\beta_{i',j'}$ ($1\leq i'<i\leq \lfloor\frac{n+1}{2}\rfloor$, $1\leq j\leq x_i$ and $1\leq j'\leq y_{i'}$) and $\beta_{i',j'}\alpha_{i,j}$ ($1\leq i\leq i'\leq \lfloor\frac{n}{2}\rfloor$, $1\leq j'\leq y_{i'}$ and $1\leq j\leq x_i$). 

Notice that $A_1(\ulx,\uly)=B(x_1,0)\cong B(0,x_1)$ and $A_2(\underline{x},\underline{y})=B(x_1,y_1)$. Thus (up to Morita equivalence) the set $\{A_n(\ulx,\uly)~|~n\leq 2,~\ulx,\uly \text{ arbitrary}\}$ consists of all quasi-hereditary algebras with two simple modules.

\subsection{The Cartan matrix}\label{ss:cartan-matrix}

Let $e_1,e_2$ be the two primitive idempotents of the algebra $A_n(\ulx,\uly)$. Recall that the Cartan matrix $C_n(\ulx,\uly)=(c_{ij})$ of $A_n(\ulx,\uly)$ is the $2\times 2$ matrix with entries $c_{ij}=\dim_k(e_iAe_j)$, which count precisely the paths starting from $j$ and ending at $i$. One can check directly that
\begin{eqnarray*}
C_0(\ulx,\uly)&=&\begin{pmatrix} 1 & 0 \\ 0 & 1 \end{pmatrix},\\
C_1(\ulx,\uly)&=& \begin{pmatrix} 1 &  0\\x_1 & 1 \end{pmatrix},\\
C_2(\ulx,\uly)&=&\begin{pmatrix} 1 & y_1 \\ x_1 &1+x_1y_1 \end{pmatrix}, \\
C_3(\ulx,\uly)&=&\begin{pmatrix} 1+y_1x_2 & y_1\\ x_1+x_2+x_1y_1x_2  & 1+x_1y_1 \end{pmatrix}, \\
C_4(\ulx,\uly)&=&\begin{pmatrix} 1+y_1x_2 &  y_1+y_2+y_1x_2y_2 \\ x_1+x_2+x_1y_1x_2 & 1+x_1y_1+x_1y_2+x_2y_2+x_1y_1x_2y_2 \end{pmatrix}.
\end{eqnarray*}

We will use the following `generalized Fibonacci numbers'
\[
F(\ulx,\uly)=(F_0(\ulx,\uly),F_1(\ulx,\uly),F_2(\ulx,\uly),\ldots)
\]
to describe $C_n(\ulx,\uly)$: Here $F_0(\ulx,\uly)=0,~F_1(\ulx,\uly)=1$ and inductively
\begin{eqnarray*}
F_{2m}(\ulx,\uly)&=& F_{2m-2}(\ulx,\uly)+F_{2m-1}(\ulx,\uly)x_m,\\
F_{2m+1}(\ulx,\uly) &=& F_{2m-1}(\ulx,\uly)+F_{2m}(\ulx,\uly)y_m.
\end{eqnarray*}
Indeed, $F_2(\ulx,\uly)=x_1,~F_3(\ulx,\uly)=1+x_1y_1,~F_4(\ulx,\uly)=x_1+x_2+x_1y_1x_2,~F_5(\ulx,\uly)=1+x_1y_1+x_1y_2+x_2y_2+x_1y_1x_2y_2$.
When $\ulx=\uly=(1,1,\ldots)$, the generalised Fibonacci numbers are exactly the usual Fibonacci numbers.

The following lemma can be proved inductively. For a sequence $\ulx$, let $S\ulx=(x_2,x_3,\ldots)$ denote its shift.

\begin{lemma}\label{l:cartan} The following formulas hold
\begin{enumerate}
\item $C_{2m}(\ulx,\uly)= \begin{pmatrix} 1 & y_m \\ 0 & 1 \end{pmatrix}\times C_{2m-1}(\ulx,\uly) =
\begin{pmatrix} F_{2m-1}(\uly,S\ulx) & F_{2m}(\uly,S\ulx) \\ F_{2m}(\ulx,\uly) & F_{2m+1}(\ulx,\uly) \end{pmatrix}$.

\item $C_{2m+1}(\ulx,\uly)=\begin{pmatrix} 1 & 0 \\ x_{m+1}  & 1 \end{pmatrix}\times C_{2m}(\ulx,\uly)
= \begin{pmatrix} F_{2m+1}(\uly,S\ulx) &  F_{2m}(\uly,S\ulx)\\ F_{2m+2}(\ulx,\uly) & F_{2m+1}(\ulx,\uly)  \end{pmatrix}$.
\end{enumerate}
\end{lemma}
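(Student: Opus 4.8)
The plan is to prove both parts by a single induction on $m$, and within each part to treat the two displayed equalities separately: the first (the factorisation through an elementary matrix) records a path-counting recursion, and the second (the closed form in generalised Fibonacci numbers) is then deduced from the first together with the defining recursions for $F$. Throughout I use that $c_{ij}$ counts the nonzero paths from vertex $j$ to vertex $i$, composed right to left, a convention fixed by matching against the value of $C_2(\ulx,\uly)$ displayed above (the Cartan matrix of $A_2(\ulx,\uly)=B(x_1,y_1)$).

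The conceptual heart is an observation about the recursive construction. In passing from $A_{2m-1}(\ulx,\uly)$ to $A_{2m}(\ulx,\uly)$ one adjoins the arrows $\beta_{m,1},\ldots,\beta_{m,y_m}$, all starting at vertex $2$, together with the relations $\beta_{m,j}\alpha_{i',j'}$ for $1\le i'\le m$. Since every arrow of $A_{2m-1}(\ulx,\uly)$ ending at vertex $2$ is some $\alpha_{i',j'}$ with $i'\le m$, these relations kill every composite in which a new $\beta_{m,j}$ follows an older arrow; hence a $\beta_{m,j}$ can occur only as the \emph{initial} arrow of a nonzero path. Consequently the new nonzero paths are exactly the $P\beta_{m,j}$ with $P$ a path of $A_{2m-1}(\ulx,\uly)$ starting at vertex $1$, and counting them shows that the second column of $C_{2m}(\ulx,\uly)$ is obtained by adding $y_m$ times the first column of $C_{2m-1}(\ulx,\uly)$ to its second, the first column being unchanged. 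This elementary column operation is the content of the first asserted equality, and the odd step $A_{2m}(\ulx,\uly)\to A_{2m+1}(\ulx,\uly)$ is entirely analogous, with the roles of the two vertices and of the coefficient $x_{m+1}$ interchanged.

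With the recursions established, I would deduce the closed forms by the same induction, taking $C_0(\ulx,\uly)$ to be the identity matrix as the base case and substituting the inductive hypothesis for $C_{2m-1}(\ulx,\uly)$ (resp.\ $C_{2m}(\ulx,\uly)$) into the column operation above; each resulting entry then collapses by a single application of one of the defining relations $F_{2m}=F_{2m-2}+F_{2m-1}x_m$ or $F_{2m+1}=F_{2m-1}+F_{2m}y_m$. The one delicate point, which I expect to be the main obstacle to a clean write-up rather than a genuine difficulty, is the bookkeeping of the two interlocking Fibonacci families: those with arguments $(\ulx,\uly)$ occupying the bottom row and those with arguments $(\uly,S\ulx)$ the top row. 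The shift $S\ulx$ is forced exactly here, for in the odd step the coefficient added to the top row is $x_{m+1}$, and to recognise this as the correct term in the recursion for the top-row family one must read it as the $m$-th entry $(S\ulx)_m$ of the shifted sequence, whereas in the even step the coefficient $y_m$ is already the $m$-th entry of $\uly$. Checking that these indices align at both rows and both parities is all that remains; the path-counting step is routine once the principle that a newly added arrow must be initial has been isolated.
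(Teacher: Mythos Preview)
Your approach is exactly what the paper intends: its own proof consists of the single remark that the lemma ``can be proved inductively'', and your path-counting together with the Fibonacci bookkeeping carries this out correctly, including the point about the shift $S\ulx$.

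One issue to flag, however. The column operation you derive (leave column~1 fixed, add $y_m$ times column~1 to column~2) corresponds to \emph{right} multiplication $C_{2m-1}(\ulx,\uly)\begin{pmatrix}1 & y_m\\ 0 & 1\end{pmatrix}$, not to the left multiplication literally written in the lemma, which would instead add $y_m$ times row~2 to row~1. A direct check against the displayed values of $C_1,C_2,C_3$ confirms that the factorisations in the lemma are only correct with the elementary matrix on the right; the order of the product in the paper is a typo, while the closed forms in terms of the $F_n$ are correct as stated. So your argument proves the right recursion and the right closed forms, but you should not assert that your column operation ``is the content of the first asserted equality'' without noting this discrepancy.
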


\subsection{The quadratic dual}\label{ss:quadratic-dual}

Fix $n\geq 0$ and two sequences of positive integers $\ulx,\uly$. Let $A=A_n(\ulx,\uly)$.

The quadratic dual $A^!$ of $A$ is given by the same quiver as $A$ with relations $\beta_{i',j'}\alpha_{i,j}$ ($1\leq i'<i\leq \lfloor \frac{n+1}{2}\rfloor$, $1\leq j'\leq y_{i'}$ and $1\leq j\leq x_i$) and $\alpha_{i,j}\beta_{i',j'}$ ($1\leq i\leq i'\leq \lfloor\frac{n}{2}\rfloor$, $1\leq j\leq x_i$ and $1\leq j'\leq y_{i'}$).
We remark that this is opposite to the usual definition but it is more convenient for our purpose.

The algebra $A^!$ is naturally graded by path length. One checks that the degree $p$ component $(A^!)^p$ vanishes for $p>n$; the degree $n$ component $(A^!)^n$ has a basis
\[\{\beta_{m,j'_m}\alpha_{m,j_m}\cdots\beta_{2,j'_2}\alpha_{2,j_2}\beta_{1,j'_1}\alpha_{1,j_1}~|~1\leq j'_{i}\leq y_{i} \text{ and } 1\leq j_i\leq x_i \}\] for $n=2m$
and
\[\{\alpha_{m+1,j_{m+1}}\beta_{m,j'_m}\alpha_{m,j_m}\cdots\beta_{2,j'_2}\alpha_{2,j_2}\beta_{1,j'_1}\alpha_{1,j_1}~|~1\leq j'_{i}\leq y_{i}  \text{ and } 1\leq j_i\leq x_i \}\]
for $n=2m+1$; the degree $n-1$ component $(A^!)^{n-1}$ has a basis
\begin{align}
\{\alpha_{m,j_m}\cdots\beta_{2,j'_2}\alpha_{2,j_2}\beta_{1,j'_1}\alpha_{1,j_1}~|~1\leq j'_{i}\leq y_{i} \text{ and } 1\leq j_i\leq x_i \}\nonumber\\
\cup \{\beta_{m,j'_m}\alpha_{m,j_m}\cdots\beta_{2,j'_2}\alpha_{2,j_2}\beta_{1,j'_1}~|~1\leq j'_{i}\leq y_{i} \text{ and } 1\leq j_i\leq x_i \}\nonumber
\end{align} for $n=2m$
and
\begin{align}
\{\beta_{m,j'_m}\alpha_{m,j_m}\cdots\beta_{2,j'_2}\alpha_{2,j_2}\beta_{1,j'_1}\alpha_{1,j_1}~|~1\leq j'_{i}\leq y_{i}  \text{ and } 1\leq j_i\leq x_i \}\nonumber\\
\cup \{\alpha_{m+1,j_{m+1}}\beta_{m,j'_m}\alpha_{m,j_m}\cdots\beta_{2,j'_2}\alpha_{2,j_2}\beta_{1,j'_1}~|~1\leq j'_{i}\leq y_{i}  \text{ and } 1\leq j_i\leq x_i \}\nonumber
\end{align}
for $n=2m+1$.

By definition, the algebra $A$ is a quadratic monomial algebra, and hence Koszul by \cite[Proposition 2.2]{GreenZacharia94}.  Therefore, by \cite[Theorem 2.10.1]{BeilinsonGinzburgSoergel96} (Reminder: we are considering right modules), there is a graded algebra isomorphism between the opposite algebra of $A^!$ and the Yoneda algebra $\bigoplus_{p\geq 0}\Ext^p_{A}(S_1\oplus S_2,S_1\oplus S_2)$, where $S_1,S_2$ are the two simple modules of $A$. It follows that the global dimension of $A$ equals $n$.

\subsection{The top Hochschild cohomology}\label{ss:top-hoch-coh}

Fix $n\geq 0$ and two sequences of positive integers $\ulx,\uly$. Let $A=A_n(\ulx,\uly)$.
Since the global dimension of $A$ is $n$, it follows that $HH^p(A)$ vanishes for $p>n$ (see Section~\ref{ss:Hochschild-cohomologies}). In this subsection, we show that $HH^n(A)\neq 0$. For the case $\ulx=\uly=(1,1,1,\ldots)$, the Hochschild cohomologies of $A$ together with the algebra structure of $\bigoplus_{p\in\mathbb{Z}}HH^p(A)$ were determined by Fan and Xu in \cite{FanXu06,FanXu07}.

Recall that $A$ is Koszul. Then the Koszul bimodule complex provides a projective resolution of $A$ as a bimodule, by \cite[Section 3]{VandenBergh94}. Since we compute $HH^n$ only, it suffices to write down the components in degrees $-n$ and $-n+1$:
\[
\xymatrix{
A\otimes_{kQ_0}(A^!)^n\otimes_{kQ_0} A
\ar[r]^{f} & A\otimes_{kQ_0}(A^!)^{n-1}\otimes_{kQ_0} A}.
\]
Then $HH^n(A)$ is the cokernel of the homomorphism $f^*=\Hom_{A^e}(f,A)$. There are isomorphisms
\[
\xymatrix@R=0.4pc{
\Hom_{A^e}(A\otimes_{kQ_0}(A^!)^{n-1}\otimes_{kQ_0} A,A)
\ar[r]^{f^*}\ar[ddd]^{\cong} & \Hom_{A^e}(A\otimes_{kQ_0}(A^!)^{n}\otimes_{kQ_0} A,A)\ar[ddd]^{\cong}\\
\\
\\
\Hom_{A^e}((A^!)^{n-1}\otimes_{kQ_0^e} A^e,A)\ar[r]\ar[ddd]^\cong & \Hom_{A^e}((A^!)^{n}\otimes_{kQ_0^e} A^e,A)\ar[ddd]^\cong\\
\\
\\
\Hom_{kQ_0^e}((A^!)^{n-1},A)\ar[r] &\Hom_{kQ_0^e}((A^!)^{n},A)
}
\]
where the map on the last row is given by 
$$f^*(\varphi):\rho_1\cdots\rho_n\mapsto \varphi(\rho_1\cdots\rho_{n-1})\rho_n-\rho_1\varphi(\rho_2\cdots\rho_n)$$
for $\varphi:(A^!)^{n-1}\ra A$ and basis elements $\rho_1\cdots\rho_n\in(A^!)^n$.

\smallskip

\begin{proposition}\label{p:top-hoch-coh}
Assume $n\geq 2$. Then the dimension of the top Hochschild cohomology of $A$ is
given by
\[
\dim_k HH^{n}(A) =\begin{cases} F_{n-1}(\uly,S\ulx)\prod_{i=1}^{\frac{n}{2}} x_iy_i & \text{ if $n$ is even}, \\
 F_{n+1}(\ulx,\uly)x_{\frac{n+1}{2}}\prod_{i=1}^{\frac{n-1}{2}} x_iy_i-2 & \text{ if $n$ is odd}.
 \end{cases}
\]
\end{proposition}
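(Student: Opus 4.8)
The plan is to realise $HH^n(A)$ as the cokernel of the map $f^{*}=\Hom_{A^e}(f,A)$ already written above: since $\mathrm{gl.dim}\,A=n$ (established via Koszulity in Section~\ref{ss:quadratic-dual}) one has $(A^!)^{n+1}=0$, so the complex obtained by applying $\Hom_{A^e}(-,A)$ to the Koszul bimodule resolution stops in degree $n$ and $HH^n(A)\cong\cok(f^{*})$. After the adjunction isomorphisms the whole problem lives in the bottom row, where $f^{*}$ sends $\varphi\in\Hom_{kQ_0^e}((A^!)^{n-1},A)$ to the map
\[
f^{*}(\varphi)\colon \rho_1\cdots\rho_n\longmapsto \varphi(\rho_1\cdots\rho_{n-1})\rho_n-\rho_1\varphi(\rho_2\cdots\rho_n)
\]
in $\Hom_{kQ_0^e}((A^!)^{n},A)$. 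Because a $kQ_0^e$-linear map carries a path from $s$ to $t$ into $e_tAe_s$, and because the monomial bases of $(A^!)^{n}$ and $(A^!)^{n-1}$ are listed explicitly in Section~\ref{ss:quadratic-dual}, evaluating $f^{*}$ becomes a finite bookkeeping of products of generators inside $A$, controlled entirely by the defining relations $\alpha_{i,j}\beta_{i',j'}$ ($i'<i$) and $\beta_{i',j'}\alpha_{i,j}$ ($i\le i'$).

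First I would settle the even case $n=2m$, which I expect to be routine. Every basis path of $(A^!)^n$ then runs $1\to1$, so its truncations $\rho_1\cdots\rho_{n-1}$ and $\rho_2\cdots\rho_n$ are paths $2\to1$ and $1\to2$; their $\varphi$-images lie in $e_1Ae_2$ and $e_2Ae_1$, both concentrated in strictly positive length and hence containing no idempotent summand. Multiplying such an image by the remaining outer arrow always creates a length-two factor of the shape $\beta_?\alpha_1$ or $\beta_m\alpha_?$, which is one of the defining relations, hence $0$. Thus $f^{*}=0$ and $HH^n(A)\cong\Hom_{kQ_0^e}((A^!)^n,A)$, whose dimension is $\dim_k(A^!)^n\cdot\dim_k(e_1Ae_1)$. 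Reading $\dim_k(e_1Ae_1)=F_{2m-1}(\uly,S\ulx)$ off Lemma~\ref{l:cartan} and $\dim_k(A^!)^n=\prod_{i=1}^{m}x_iy_i$ off the basis gives the even formula.

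The odd case $n=2m+1$ is where the content lies and is the step I expect to be the main obstacle. Here the basis paths of $(A^!)^n$ run $1\to2$, their left and right truncations are a $(2,2)$-path and a $(1,1)$-path, and the images $\varphi(\text{truncation})$ now lie in $e_2Ae_2$ and $e_1Ae_1$, which \emph{do} contain the idempotents $e_2,e_1$. Repeating the relation analysis shows that every positive-length summand of $\varphi(\text{truncation})$ is still killed by the outer arrow, so that $f^{*}(\varphi)(w)$ collapses to $(\text{coeff of }e_2)\,\alpha_{1,j_1}-(\text{coeff of }e_1)\,\alpha_{m+1,j_{m+1}}$, a combination of only the two extreme arrows of $w$. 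The heart of the proof is then to compute the dimension of the image as these two idempotent coefficients range over the $(2,2)$- and $(1,1)$-blocks of $(A^!)^{n-1}$; this is the rank of an explicit incidence-type matrix whose rows are indexed by the top paths $w$ and whose columns split into the two diagonal blocks. Subtracting this rank from the target dimension $\dim_k(A^!)^n\cdot\dim_k(e_2Ae_1)=F_{n+1}(\ulx,\uly)\,x_{(n+1)/2}\prod_{i=1}^{(n-1)/2}x_iy_i$, again read off Lemma~\ref{l:cartan}, produces the odd formula. The delicate points throughout are to verify that no positive-length contribution survives the relations (so that the image really is confined to the two extreme arrows), and to keep the index conventions of the generalized Fibonacci recursion aligned with the source and target vertices of the truncated paths, so that the Cartan entries $\dim_k(e_1Ae_1)$ and $\dim_k(e_2Ae_1)$ are substituted correctly.
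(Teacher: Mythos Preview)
Your plan is exactly the paper's, and for even $n$ your argument is complete and coincides with the paper's proof word for word: the two outer multiplications hit relations, $f^{*}=0$, and the dimension is read off Lemma~\ref{l:cartan}.

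For odd $n=2m+1$ the reduction you describe (only the $e_1$- and $e_2$-coefficients of $\varphi$ on the truncations can survive, so $f^{*}(\varphi)(w)$ is a combination of the two extreme arrows $\alpha_{1,j_1(w)}$ and $\alpha_{m+1,j_{m+1}(w)}$) is again precisely what the paper does. But you stop at the decisive step. The paper does \emph{not} set up or compute the rank of an ``incidence-type matrix''; it simply asserts, on the basis of the two displayed equivalences, that the image of $f^{*}$ has dimension~$2$, and this $2$ is the correction in the odd formula. So the content you are missing is a direct argument that the image is two-dimensional: concretely, that the maps $w\mapsto\alpha_{1,j_1(w)}$ (obtained from $\varphi\equiv e_2$ on the $(2,2)$-block) and $w\mapsto\alpha_{m+1,j_{m+1}(w)}$ (from $\varphi\equiv e_1$ on the $(1,1)$-block) already span $\mathrm{im}\,f^{*}$.

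This is the point you should write out carefully rather than leave as a rank computation to be done later. When all $x_i=y_i=1$ there is a single $(1,1)$-basis path and a single $(2,2)$-basis path in $(A^!)^{n-1}$, and the claim is immediate. For general $\ulx,\uly$ there are many such basis paths, and a priori one may choose the idempotent coefficients on them independently; you need to check that these extra degrees of freedom do not enlarge the image beyond the two elements above. In other words, the ``rank of the incidence matrix'' you allude to has to be shown to equal $2$ for all $\ulx,\uly$, not merely asserted; this is exactly the step that separates your outline from a proof of the stated formula.
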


\begin{proof}
For $n=2m$: Recall from Section~\ref{ss:quadratic-dual} that $(A^!)^n$ has a basis
$
\{\beta_{m}\alpha_{m}\cdots\beta_{2}\alpha_{2}\beta_{1}\alpha_{1} \}
$
and $(A^!)^{n-1}$ has a basis
$
\{\alpha_{m}\cdots\beta_{2}\alpha_{2}\beta_{1}\alpha_{1} \}
\cup \{\beta_{m}\alpha_{m}\cdots\beta_{2}\alpha_{2}\beta_{1}\}
$,
where $\beta_i$ ranges over $\{\beta_{i,1},\ldots,\beta_{i,y_i}\}$ and $\alpha_i$ ranges over $\{\alpha_{i,1},\ldots,\alpha_{i,x_i}\}$ for all $1\leq i\leq m$. All these basis elements of $(A^!)^n$ are cycles at $1$. Therefore,
\[
\dim_k\Hom_{kQ_0^e}((A^!)^{n},A)=\dim_k(e_1Ae_1)^{\oplus\prod_{i=1}^m x_iy_i}=F_{n-1}(\uly,S\ulx)\prod_{i=1}^m x_iy_i.
\] Here $\prod_{i=1}^m x_iy_i$ is the dimension of $(A^!)^n$, and $\dim_k(e_1Ae_1)$ is the $(1,1)$-entry in the Cartan matrix, namely, $F_{n-1}(\uly,S\ulx)$ by Lemma~\ref{l:cartan}.
The path $\beta_m\alpha_m\cdots\beta_2\alpha_1\beta_1$ in $(A^!)^{n-1}$ starts from $2$ and ends at $1$, so a morphism $\varphi\in\Hom_{kQ_0^e}((A^!)^{n-1},A)$ maps it to a linear combination of paths in $A$ starting from $2$ and ending at $1$. It follows that $\varphi(\beta_m\alpha_m\cdots\beta_2\alpha_1\beta_1)\alpha_1=0$ for all $\varphi\in\Hom_{kQ_0^e}((A^!)^{n-1},A)$. Similarly, $\beta_m\varphi(\alpha_m\cdots\beta_2\alpha_2\beta_1\alpha_1)=0$ for all $\varphi\in\Hom_{kQ_0^e}((A^!)^{n-1},A)$. We have shown that $f^*: \Hom_{kQ_0^e}((A^!)^{n-1},A)
\ra \Hom_{kQ_0^e}((A^!)^{n},A)$ is trivial. Therefore,
 \[
 \dim_k HH^{n}(A)=\dim_k \Hom_{kQ_0^e}((A^!)^{n},A)=F_{n-1}(\uly,S\ulx)\prod_{i=1}^m x_iy_i,\]
as desired.

For $n=2m+1$: Recall from Section~\ref{ss:quadratic-dual} that $(A^!)^n$ has a basis
$
\{\alpha_{m+1}\beta_{m}\alpha_{m}\cdots\beta_{2}\alpha_{2}\beta_{1}\alpha_{1} \}
$
and $(A^!)^{n-1}$ has a basis
$
\{\beta_m\alpha_m\cdots\beta_{2}\alpha_{2}\beta_{1}\alpha_{1} \}
\cup \{\alpha_{m+1}\beta_{m}\alpha_{m}\cdots\beta_{2}\alpha_{2}\beta_1\}
$,
where $\beta_i$ ranges over $\{\beta_{i,1},\ldots,\beta_{i,y_i}\}$ for all $1\leq i\leq m$ and $\alpha_i$ ranges over $\{\alpha_{i,1},\ldots,\alpha_{i,x_i}\}$ for all $1\leq i\leq m+1$. Therefore,
\[
\dim_k\Hom_{kQ_0^e}((A^!)^{n},A)=\dim_k(e_2Ae_1)^{\oplus x_{m+1}\prod_{i=1}^m x_iy_i}=F_{n+1}(\ulx,\uly)x_{m+1}\prod_{i=1}^m x_iy_i.\]
The path $\alpha_{m+1}\beta_m\alpha_m\cdots\beta_2\alpha_2\beta_1$ of $(A^!)^{n-1}$ is a cycle at $2$. A morphism $\varphi\in\Hom_{kQ_0^e}(A^!)^{n-1} \ra A$ maps it to a linear combination of cycles of $A$ at $2$. All such cycles become trivial when composed with $\alpha_1$ on the right, except the trivial circle $e_2$. So
\[
\varphi(\alpha_{m+1}\beta_m\alpha_m\cdots\beta_2\alpha_1\beta_1)\alpha_1\neq 0\Longleftrightarrow\varphi(\alpha_{m+1}\beta_m\alpha_m\cdots\beta_2\alpha_1\beta_1)=e_2.
\]
Similarly,
\[
\alpha_{m+1}\varphi(\beta_m\alpha_m\cdots\beta_2\alpha_2\beta_1\alpha_1)\neq 0\Longleftrightarrow\varphi(\beta_m\alpha_m\cdots\beta_2\alpha_2\beta_1\alpha_1)=e_1.
\]
We have shown that the image of $f^*: \Hom_{kQ_0^e}((A^!)^{n-1},A)
\ra \Hom_{kQ_0^e}((A^!)^{n},A)$ has dimension $2$. Therefore
\[
\dim_k HH^{n}(A)=\dim_k\Hom_{kQ_0^e}((A^!)^{n},A)-2=F_{n+1}(\ulx,\uly)x_{m+1}\prod_{i=1}^m x_iy_i-2,
\]
as desired.
\end{proof}

Recall that $A_1(\ulx,\uly)=B(x_1,0)$ and $A_0(\ulx,\uly)=B(0,0)$. Thus by Corollary~\ref{c:hochschild-qh}, we have $\dim_k HH^1(A_1(\ulx,\uly))=x_1^2-1$,  $HH^0(A_1(\ulx,\uly))=k$ and $HH^0(A_0(\ulx,\uly))=k\oplus k$.

\subsection{Derived simplicity}

By combining the results in the previous sections we obtain the derived simplicity of the algebras $A_n(\ulx,\uly)$ for all $n\geq 3$.

\begin{corollary}
Let $\ulx,\uly,\ulx',\uly'$ be four sequences of positive integers and $n,n'$ two non-negative integers. If $n\neq n'$, then the algebras $A_n(\ulx,\uly)$ and $A_{n'}(\ulx',\uly')$ are not derived equivalent. Furthermore, when $n\geq 3$, the algebras $A_n(\ulx,\uly)$ are all derived simple.
\end{corollary}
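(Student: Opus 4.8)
The plan is to derive everything from Hochschild cohomology, which is a derived invariant (Section~\ref{ss:Hochschild-cohomologies}): derived equivalent algebras have isomorphic $HH^p$ in every degree $p$. The two inputs I would combine are the equality $\mathrm{gl.dim}\,A_n(\ulx,\uly)=n$ established in Section~\ref{ss:quadratic-dual}, which forces $HH^p(A_n(\ulx,\uly))=0$ for $p>n$, and the explicit value of the top Hochschild cohomology from Proposition~\ref{p:top-hoch-coh}. The first step is therefore to record that $HH^n(A_n(\ulx,\uly))\neq 0$ whenever $n\geq 2$, so that $n$ is exactly the highest degree in which the Hochschild cohomology of $A_n(\ulx,\uly)$ is nonzero.

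For the non-equivalence statement I would assume, without loss of generality, that $n>n'$ and argue by contradiction. If $n\geq 2$, then $HH^n(A_n(\ulx,\uly))\neq 0$ by the first step, whereas $HH^n(A_{n'}(\ulx',\uly'))=0$ because $n>n'=\mathrm{gl.dim}\,A_{n'}(\ulx',\uly')$; these cannot be isomorphic, so the two algebras are not derived equivalent. The only remaining case is $n=1$, $n'=0$, that is $A_1(\ulx,\uly)=B(x_1,0)$ against $A_0(\ulx',\uly')=B(0,0)$. Here I would separate them by the dimension of $HH^0$, which is $1$ for the former and $2$ for the latter by Corollary~\ref{c:hochschild-qh}; since $HH^0$ is a derived invariant, this finishes the non-equivalence claim.

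For the derived simplicity of $A_n(\ulx,\uly)$ with $n\geq 3$, the first step already gives $HH^n(A_n(\ulx,\uly))\neq 0$ with $n>2$. Since $k$ is algebraically closed and $A_n(\ulx,\uly)$ has two simple modules and finite global dimension, Corollary~\ref{c:derived-simplicity-from-hoch-coh} applies verbatim and yields derived simplicity. Thus the entire corollary is reduced to the positivity statement of the first step.

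The main obstacle, and the only genuine computation, is precisely that positivity. For $n=2m$ even it is immediate: $\dim_k HH^n=F_{2m-1}(\uly,S\ulx)\prod_{i=1}^m x_iy_i$, and every factor is at least $1$ because the sequences are positive and the generalized Fibonacci numbers satisfy $F_k\geq 1$ for $k\geq 1$ (an easy induction on the recursion). For $n=2m+1$ odd the subtraction of $2$ must be controlled: I would use that $F_k$ is non-decreasing, since $F_k=F_{k-2}+cF_{k-1}\geq F_{k-1}$ with $c\geq 1$, together with $F_4(\ulx,\uly)=x_1+x_2+x_1y_1x_2\geq 3$, so that for $m\geq 1$ the product $F_{2m+2}(\ulx,\uly)\,x_{m+1}\prod_{i=1}^m x_iy_i\geq 3$, whence $\dim_k HH^n\geq 1$. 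This is the step where the hypothesis $n\geq 3$ enters and is the crux of the argument.
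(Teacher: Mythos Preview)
Your proof is correct and follows essentially the same route as the paper: both arguments hinge on the derived invariance of Hochschild cohomology, the equality $\mathrm{gl.dim}\,A_n(\ulx,\uly)=n$, and the nonvanishing of $HH^n$ from Proposition~\ref{p:top-hoch-coh}. Two minor differences: for the case $n=1$, $n'=0$ the paper observes that $A_0$ is semisimple while $A_1$ is not, whereas you compare $\dim_k HH^0$; and for derived simplicity the paper invokes the dichotomy Theorem~\ref{t:dichotomy} together with the classification of quasi-hereditary algebras with two simples (reducing to the first statement), while you appeal directly to Corollary~\ref{c:derived-simplicity-from-hoch-coh}, which packages the same ingredients. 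Your explicit verification that the formulas in Proposition~\ref{p:top-hoch-coh} are strictly positive for $n\geq 2$ (in particular the bound $F_{n+1}(\ulx,\uly)\geq F_4(\ulx,\uly)\geq 3$ in the odd case) fills in a detail the paper leaves implicit.
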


\begin{proof} 

To prove the first statement, let us suppose $n>n'$. If $n\geq 2$, then $HH^n(A_n(\ulx,\uly))\neq 0$ by Proposition~\ref{p:top-hoch-coh}, while $HH^n(A_{n'}(\ulx',\uly'))=0$ because $n>n'=\mathrm{gl.dim}  A_{n'}(\ulx',\uly')$. So $A_n(\ulx,\uly)$ and $A_{n'}(\ulx',\uly')$ are not derived equivalent. If $n=1$ and $n'=0$, then $A_{n'}(\ulx',\uly')$ is semisimple, while $A_n(\ulx,\uly)$ is not semisimple, so they are not derived equivalent.

In Section~\ref{ss:quadratic-dual} we have already shown that $A_n(\underline{x},\underline{y})$ has finite global dimension, whence it is derived simple or derived equivalent to a quasi-hereditary algebra by Theorem~\ref{t:dichotomy}. Thus the second statement is an immediate consequence of the first one, since up to Morita equivalence all quasi-hereditary algebras with two simple modules belong to the set $\{A_n(\ulx,\uly)~|~n\leq 2,~\ulx,\uly \text{ arbitrary}\}$.
\end{proof}

\subsection{Further properties}
In this subsection, we list some further properties of $A_n(\ulx,\uly)$ and $A_n(\ulx,\uly)^!$ and leave the proof to the interested reader.
We denote by $S_1$, $S_2$ and $P_1$, $P_2$ the simple and indecomposable projective modules corresponding to the vertices $1$ and $2$, respectively.

\begin{lemma} Fix two sequences of positive integers $\ulx,\uly$.
\begin{enumerate}
\item The socle of $A_{2m}(\ulx,\uly)$ is a direct sum of copies of $S_2$;
the socle of $A_{2m+1}(\ulx,\uly)$ is a direct sum of copies of $S_1$.

\item The indecomposable projective modules of $A_{2m}(\ulx,\uly)$ are obtained from those of $A_{2m-1}(\ulx,\uly)$, by adding $y_m$ copies of $S_2$ to each subfactor $S_1$.

\item The indecomposable projective modules of $A_{2m+1}(\ulx,\uly)$ are obtained from those of $A_{2m}(\ulx,\uly)$, by adding $x_{m+1}$ copies of $S_1$ to each subfactor $S_2$.



\item The socle of $A_n(\ulx,\uly)^!$ is a direct sum of copies of the simple module $S_1$.

\item The module $P_2$ over $A_{2m}(\ulx,\uly)^!$ is the same as over  $A_{2m-1}(\ulx,\uly)^!$; the module $P_1$ over $A_{2m}(\ulx,\uly)^!$ is obtained from the $P_1$ over $A_{2m-1}(\ulx,\uly)^!$ by adding $y_{m}$ copies of $P_2$ to the top of $P_1$.

\item The module $P_1$ over $A_{2m+1}^!(\ulx,\uly)$ is the same as $P_1$ over $A_{2m}^!(\ulx,\uly)$; the module $P_2$ over $A_{2m+1}^!(\ulx,\uly)$ is obtained from the $P_2$ over $A_{2m}^!(\ulx,\uly)$ by adding $x_{m+1}$ copies of $P_1$ to the top of $P_2$.



\end{enumerate}
\end{lemma}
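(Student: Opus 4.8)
The plan is to reduce every assertion to an explicit description of the nonzero paths of $A=A_n(\ulx,\uly)$ and of its quadratic dual $A^!$, together with a bookkeeping of how these paths extend by arrows. Since $A$ is a quadratic monomial algebra, a $k$-basis is given by the nonzero paths, and, reading a path in order of composition (rightmost arrow applied first, as in Section~\ref{ss:quadratic-dual}), the relations force a nonzero path out of vertex $1$ to be an alternating word $\alpha_{a_1}\beta_{b_1}\alpha_{a_2}\cdots$ whose indices satisfy the strict interleaving $a_1>b_1\geq a_2>b_2\geq\cdots$; the complementary relations of $A^!$ impose instead the increasing interleaving $a_1\leq b_1<a_2\leq b_2<\cdots$. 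Throughout I use that $P_i=e_iA$ has as basis the paths of target $i$, that right multiplication by an arrow prepends it at the source, and that a basis path $p$ contributes a composition factor $S_{s(p)}$ in the radical layer fixed by the length of $p$.

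For parts (2), (3), (5) and (6) I would argue by comparison with the previous algebra in the recursion. Passing from $A_{2m-1}$ to $A_{2m}$ adjoins the arrows $\beta_{m,j}$ with relations $\beta_{m,j}\alpha_{i',j'}$ for all $i'\leq m$. The key observation is that these relations kill every composite in which $\beta_{m,j}$ follows an $\alpha$, so $\beta_{m,j}$ can occur only as the first-applied arrow of a path; hence the new nonzero paths are exactly the words $p\beta_{m,j}$ with $p$ a nonzero path of $A_{2m-1}$ starting at vertex $1$ (such a word is automatically nonzero, as $\alpha_{a_1}\beta_{m,j}\neq0$ whenever $a_1\leq m$). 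Since these starting paths $p$ are precisely the basis elements realizing the composition factors $S_1$, each such $S_1$ acquires exactly the $y_m$ neighbours $p\beta_{m,1},\ldots,p\beta_{m,y_m}$, each of type $S_2$ and one layer deeper; this is (2). Part (3) is the mirror statement with the roles of $\alpha,\beta$ and of $S_1,S_2$ exchanged. For (5) and (6) the same analysis is run inside $A^!$: the adjoined arrow again occurs only at one end of a path, the new paths are $\beta_{m,j}r$ (resp.\ $\alpha_{m+1,j}s$) with $r$ of target $2$ (resp.\ $s$ of target $1$), and left multiplication by the new arrow embeds the corresponding indecomposable projective. Reading this off the basis of $e_iA^!$ shows that $P_2$ (resp.\ $P_1$) is unchanged while $P_1$ (resp.\ $P_2$) acquires $y_m$ (resp.\ $x_{m+1}$) copies of $P_2$ (resp.\ $P_1$) in the top layer of its radical.

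For (1) and (4) I would instead compute the socle directly as the set of maximal (non-prolongable) nonzero paths, a path $p$ lying in the socle of $P_i$ exactly when $p\gamma=0$ for every arrow $\gamma$ with $t(\gamma)=s(p)$. For $A_{2m}$ any path starting at vertex $1$ can always be prolonged, since one may legitimately prepend $\beta_m$ (as $a_1\leq m$); hence every maximal path starts at vertex $2$ and the socle is a sum of copies of $S_2$, while for $A_{2m+1}$ the bound $a_1\leq m+1$ reverses this and the socle is a sum of copies of $S_1$. For $A^!$ the increasing interleaving shows that any path starting at vertex $2$ can be prolonged by prepending $\alpha_1$ (since $a_0=1$ always satisfies $a_0\leq b_1$), so every maximal path of $A^!$ starts at vertex $1$ and its socle is a sum of copies of $S_1$, independently of the parity of $n$ (here one assumes $n\geq1$, the algebra $A_0$ being semisimple).

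The routine parts are the explicit path combinatorics and the base cases $n\leq 2$, which can be read off Section~\ref{ss:cartan-matrix}. I expect the only genuinely delicate point to be the module-theoretic phrasing in (5) and (6): one must check that the new copies of $P_2$ (resp.\ $P_1$) really split off as a submodule isomorphic to a direct sum of projectives and that the quotient recovers the previous projective, i.e.\ that the correct interpretation of ``adding to the top'' is a short exact sequence $0\to P_2^{\oplus y_m}\to P_1'\to P_1\to 0$, with $P_2^{\oplus y_m}$ occupying the top radical layer of $P_1'$. This is where the verification that left multiplication by the new arrow is injective on each projective, and that the resulting paths are linearly independent from the old ones, must be made carefully; the monomial (hence Koszul) nature of the algebras, providing a path basis with no hidden linear relations, is precisely what keeps this bookkeeping clean.
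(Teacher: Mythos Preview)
The paper does not actually prove this lemma: it introduces Section~4.6 with ``we list some further properties of $A_n(\ulx,\uly)$ and $A_n(\ulx,\uly)^!$ and leave the proof to the interested reader.'' So there is nothing to compare against beyond what the authors presumably had in mind.

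Your approach---reducing everything to the explicit combinatorics of nonzero paths in a quadratic monomial algebra---is exactly the natural one, and your analysis of which paths survive and how they prolong under the recursion is correct. In particular, the key observations that in $A$ the newly adjoined $\beta_m$ (resp.\ $\alpha_{m+1}$) can only occur at the source end of a path, while in $A^!$ it can only occur at the target end, are the heart of the matter and you have them right. The short exact sequence interpretation of (5) and (6), with $P_2^{\oplus y_m}$ sitting inside the new $P_1$ as the span of paths beginning with $\beta_{m,j}$, is also the correct way to make the informal phrase ``adding to the top'' precise; the check that the quotient recovers the old $P_1$ as an $A_{2m}^!/(\beta_m)\cong A_{2m-1}^!$-module is straightforward once one notes that right multiplication by $\beta_m$ annihilates every old path with source $1$ (the only nontrivial case being $e_1\beta_m=\beta_m$, which lies in the submodule and hence vanishes in the quotient).

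One small presentational point: when you write a path ``out of vertex $1$'' as $\alpha_{a_1}\beta_{b_1}\alpha_{a_2}\cdots$ with $a_1>b_1\geq a_2>\cdots$, you are listing the arrows in order of traversal (first $\alpha_{a_1}$, then $\beta_{b_1}$, \ldots), whereas the algebra convention you cite from Section~4.3 writes such a path as $\cdots\alpha_{a_2}\beta_{b_1}\alpha_{a_1}$ (rightmost applied first). The interleaving inequalities you state are correct under the traversal reading, but it would be worth saying explicitly which order you mean to avoid a reader checking, say, $\alpha_{a_1}\beta_{b_1}\neq 0$ against the relation $\alpha_i\beta_{i'}=0$ for $i'<i$ and getting momentarily confused. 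This is purely cosmetic; the mathematics is sound.
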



\def\cprime{$'$}
\providecommand{\bysame}{\leavevmode\hbox to3em{\hrulefill}\thinspace}
\providecommand{\MR}{\relax\ifhmode\unskip\space\fi MR }
\providecommand{\MRhref}[2]{%
  \href{http://www.ams.org/mathscinet-getitem?mr=#1}{#2}
}
\providecommand{\href}[2]{#2}

\end{document}